\documentclass{siamart251216}

\usepackage{amssymb}
\usepackage{bm}
\usepackage{todonotes}
\usepackage[nameinlink]{cleveref}
\usepackage{tikz}
\usepackage{pgfplots}
\pgfplotsset{compat=1.18}
\usepgfplotslibrary{groupplots}

\hypersetup{
    colorlinks,
    linkcolor={red!50!black},
    citecolor={blue!50!black},
    urlcolor={blue!80!black}
}

\definecolor{seaborngreen}{rgb}{0.3333333333333333, 0.6588235294117647, 0.40784313725490196}  
\definecolor{seaborncyan}{rgb}{0.39215686274509803, 0.7098039215686275, 0.803921568627451}  
\definecolor{seabornblue}{rgb}{0.2980392156862745, 0.4470588235294118, 0.6901960784313725}  
\definecolor{seabornpurple}{rgb}{0.5058823529411764, 0.4470588235294118, 0.6980392156862745}  
\definecolor{seabornred}{rgb}{0.7686274509803922, 0.3058823529411765, 0.3215686274509804}  
\definecolor{seabornorange}{rgb}{0.958, 0.476, 0.206}  
\definecolor{seabornsand}{rgb}{0.8, 0.7254901960784313, 0.4549019607843137}  

\DeclareMathOperator{\tr}{tr}

\newcommand{\bbP}{\mathbb{P}}

\newcommand{\bbR}{\mathbb{R}}
\newcommand{\bbS}{\mathbb{S}}

\newcommand{\bbV}{\mathbb{V}}

\newcommand{\bfF}{\mathbf{F}}
\newcommand{\bfG}{\mathbf{G}}

\newcommand{\bfQ}{\mathbf{Q}}
\newcommand{\bfR}{\mathbf{R}}

\newcommand{\bfX}{\mathbf{X}}

\newcommand{\bfn}{\mathbf{n}}

\newcommand{\bft}{\mathbf{t}}

\newcommand{\bfx}{\mathbf{x}}
\newcommand{\bfy}{\mathbf{y}}

\newcommand{\bfLambda}{\bm{\Lambda}}

\newcommand{\bfnu}{\bm{\nu}}

\newcommand{\bfzero}{\mathbf{0}}  

\newcommand{\calE}{\mathcal{E}}

\newcommand{\calJ}{\mathcal{J}}
\newcommand{\calK}{\mathcal{K}}

\newcommand{\calM}{\mathcal{M}}

\newcommand{\calO}{\mathcal{O}}

\newcommand{\rmd}{\mathrm{d}}

\let\div\relax
\DeclareMathOperator{\div}{div}

\newsiamremark{remark}{Remark}
\crefname{remark}{Remark}{Remarks}

\headers{Arbitrary-order SP schemes for geometric flows}{G.~Zhang, B.~D.~Andrews, and P.~E.~Farrell}
\title{Arbitrary-order structure-preserving discretizations for geometric curvature flows\thanks{
\funding{GZ and BDA were supported by the European Union [ERC, GeoFEM, 101164551].
PEF was supported by
the Engineering and Physical Sciences Research Council [grant no.~EP/W026163/1],
the Science and Technology Facilities Council [grant no.~UKRI/ST/B000495/1],
the Donatio Universitatis Carolinae Chair ``Mathematical modelling of multicomponent systems'',
the UKRI Digital Research Infrastructure Programme through the Science and Technology Facilities Council's Computational Science Centre for Research Communities (CoSeC),
and
the National Science Foundation under grant no.~DMS-1929284 while in residence at the Institute for Computational and Experimental Research in Mathematics in Providence, USA.
The authors gratefully acknowledge the hospitality of the Erwin Schrödinger International Institute for Mathematics and Physics, Vienna, where part of this work was carried out.
For the purpose of open access, the authors have applied a CC BY public copyright licence to any author accepted manuscript arising from this submission.}}}
\author{Ganghui Zhang\thanks{Mathematical Institute, University of Oxford, UK (\email{ganghui.zhang@maths.ox.ac.uk})}. \and Boris D.~Andrews\thanks{Mathematical Institute, University of Oxford, UK (\email{boris.andrews@maths.ox.ac.uk}).} \and Patrick E.~Farrell\thanks{Mathematical Institute, University of Oxford, UK and Mathematical Institute, Faculty of Mathematics and Physics, Charles University, Czechia (\email{patrick.farrell@maths.ox.ac.uk}).}}

\begin{document}

\maketitle

\begin{abstract}
    Geometric flows, where an immersed manifold evolves in time according to its own geometry, exhibit important structural properties.
    For example, surface diffusion dissipates surface area while conserving volume;
    it is desirable to preserve these properties on discretization.
    This has motivated a substantial body of research on structure-preserving discretizations for these flows, albeit at low order in time.
    In this work, we present the first discretization of geometric curvature flows (curve shortening/mean curvature flow and curve/surface diffusion) that preserves the evolution of area and volume at arbitrary order in space and time.
    The key idea is to introduce auxiliary variables in a particular way so that the derivation of the area dissipation law can be replicated after discretization with continuous Petrov--Galerkin in time.
    These auxiliary variables are indicated by a general strategy for structure-preservation in time that applies to many other problems.
    The proposed scheme also preserves mesh quality in the same manner as the minimal deformation rate strategy.
    We demonstrate its structure-preserving properties and high-order convergence on several benchmark examples.
\end{abstract}

\begin{keywords}
    geometric flows, mean curvature flow, surface diffusion, structure-preserving discretizations, minimal deformation rate, continuous Petrov--Galerkin
\end{keywords}

\begin{MSCcodes}
35R01, 35K55, 65M60, 65M12
\end{MSCcodes}

\section{Introduction}\label{sec:introduction}

Geometric flows, particularly mean curvature flow (MCF) and surface diffusion (SD), play a crucial role in many scientific and engineering applications.
Both flows can be traced back to Mullins' pioneering works on grain boundary evolution in polycrystalline materials~\cite{Mullins1956,Mullins1957}:
MCF describes the motion of idealized grain boundaries driven by curvature~\cite{Mullins1956}, while SD models the evolution of grain boundaries through thermal grooving~\cite{Mullins1957}.
Since then, MCF has been extensively investigated in various applications, including image processing and segmentation~\cite{Alvarez1993,Osher1988}, multiphase flow and interface dynamics~\cite{Bronsard1991}, and surface fairing in computer graphics~\cite{Desbrun1999}.
Likewise, SD has sparked extensive research, including applications in crystal growth~\cite{Gilmer1972,Gomer1990}, the evolution of voids in microelectronic circuits~\cite{Li1999,Barrett2006}, and solid-state dewetting~\cite{Wang2015}.
Such geometric evolution equations on immersed, moving manifolds possess fundamental geometric structures, including conservation laws and dissipation inequalities.
For reliable long-time simulation, it is desirable that numerical methods inherit these properties~\cite{Hairer_Lubich_Wanner_2006}.

We consider geometric flows on closed immersed $d$-manifolds $\calM_t \subset \bbR^{d+1}$ (i.e.~of codimension 1) determined by their mean curvature $\kappa : \calM_t \to \bbR$.
At each time $t$, we parameterize $\calM_t$ by $\bfX_t : \widetilde{\calM} \to \bbR^{d+1}$, where $\widetilde{\calM}$ is a reference $d$-manifold, such that $\calM_t$ is the image $\bfX_t(\widetilde{\calM})$.
In this work we consider two prominent geometric flows:
curve shortening/mean curvature flow and curve/surface diffusion. 
MCF prescribes that $\dot{\bfX}_t \cdot \bfn = \kappa$;
SD prescribes that $\dot{\bfX}_t \cdot \bfn = - \, \Delta_{\calM_t} \kappa$, where $\Delta_{\calM_t}$ denotes the Laplace--Beltrami operator on $\calM_t$, $\bfn : \calM_t \to \bbR^{d+1}$ is the outward-facing unit normal to $\calM_t$, and $\kappa = \Delta_{\calM_t}\bfX \cdot \bfn$ denotes the mean curvature.
On the continuous level, these two geometric PDEs are governed by the motion of $\calM_t$ in the normal direction.

The desirability of structure preservation has led to a rich literature on the discretization of these flows.
The pioneering Dziuk~\cite{Dziuk_1994} and Barrett--Garcke--N\"urnberg (BGN)~\cite{Barrett_Garcke_Nurnberg_2007,Barrett_Garcke_Nurnberg_2008a,Barrett_Garcke_Nurnberg_2008b} schemes guarantee area dissipation for MCF or SD.
For SD, volume-conserving methods were first considered by Jiang \& Li~\cite{Jiang21} in the case of curve diffusion, i.e.~when $d=1$.
Bao \& Zhao~\cite{Bao_Zhao_2021} observed that certain approximations of the normal vector $\bfn$ in the discrete setting ensure volume conservation in SD for $d \in \{1, 2\}$;
this intermediate normal technique has been extended to other related problems in parametric finite element methods~\cite{Bao-Jiang-Li,Bao-Li2023,Bao-Li2024}.
The above methods are restricted to order 1 in both space and time.
At higher order, Garcke, Jiang, Su \& the first author introduced Lagrange multipliers to preserve area dissipation and volume conservation for SD with order 1 in space and 2 in time~\cite{Garcke_et_al_2025a}, and Garcke, N\"urnberg, Praetorius \& the first author developed a structure-preserving scheme with arbitrary order in space but order 1 in time~\cite{Garcke_et_al_2025b}.
Duan, Li and Zhang \cite{Duan2021} also developed arbitrary order area-dissipating schemes for MCF based on an averaged vector field collocation method in time.
We refer readers to~\cite{jiang2024stable,jiang2025predictor,Jiang23,deckelnick2026second,Duan2024} for non-structure-preserving approaches with higher order in space or time, and to~\cite{Li1,Li2,Kovacs-Li-Lubich2019,Bai2023,Bai2024} for the convergence results of discretizations of MCF and SD.
This work proposes the first discretization that is simultaneously arbitrary-order in space, arbitrary-order in time, and structure-preserving.

One of the characteristic challenges of simulating geometric flows is retaining good mesh quality.
On the continuous level the evolution of the manifold is specified entirely by its normal motion, and tangential motions are redundant.
However, when discretized in space, tangential motion changes the mesh, with major impact on the quality of the discretization.
For example, tangential velocity is not controlled in Dziuk's scheme~\cite{Dziuk_1994}, leading to severe mesh distortion after prolonged evolution.
The BGN scheme discussed above~\cite{Barrett_Garcke_Nurnberg_2007,Barrett_Garcke_Nurnberg_2008a,Barrett_Garcke_Nurnberg_2008b} employs a mixed formulation that (when combined with a suitable normal approximation and a lumped quadrature scheme) introduces a tangential constraint that equidistributes mesh vertices for curves.
Minimal deformation rate (MDR) discretizations, proposed by Hu \& Li~\cite{Hu_Li_2022}, choose the tangential motion to minimize a certain deformation rate $\int_{\calM_t}\|\nabla_{\calM_t}\dot{\bfX}_t\|^2$, where $\nabla_{\calM_t}$ denotes the surface gradient on $\calM_t$.
This can be shown to behave similarly to the BGN scheme. The tradeoff is that for small step sizes BGN-based methods can suffer from mesh distortion~\cite{Duan2024} or failure to converge with fully implicit schemes~\cite[Remark 3.1]{barrett2011approximation}, while MDR in its original formulation loses the area stability of the BGN scheme.
Modifications to the MDR scheme to preserve area stability have been proposed by Gao \& Li~\cite{Gao-Li}, and Gao, Garcke, Li \& Tang~\cite{Gao-Garcke-Li-Tang}.
Gao, Li \& Tang~\cite{Gao_Li_Tang_2026} recently proposed a modification to the MDR method, dubbed \emph{dual}-MDR, that introduces an additional auxiliary variable into the discretization.
The dual-MDR scheme was designed to achieve area dissipation while retaining the high mesh quality of MDR within a semi-implicit discretization of order 1 in time. 

The key idea of the discretization we propose is the introduction of auxiliary variables into the MDR scheme in a manner that preserves the dissipation of area while inheriting the mesh stability of MDR.
Area dissipation can be derived for both MCF and SD by testing the equations with the mean curvature $\kappa$;
by introducing a suitable auxiliary variable approximating $\kappa$, the same variational argument can be applied at the semi-discrete level, and at the discrete level under a continuous Petrov--Galerkin (CPG) discretization in time.
This idea of systematically introducing auxiliary variables in CPG discretizations to preserve conservation and dissipation structures is inspired by the general framework proposed by the last two authors~\cite{Andrews_Farrell_2025b}.
On the semi-discrete level, the auxiliary variables indicated by our framework coincide with the dual-MDR formulation discussed above~\cite{Gao_Li_Tang_2026}, but differ at the discrete level.
Our proposed scheme, defined for arbitrary order in time, thus preserves the dissipation of area for both MCF and SD.
For conservation of volume in SD, the relevant test function is the constant function $1$;
this is in the test set used to construct the CPG discretization, and so volume conservation is inherited by CPG automatically.
We anticipate that the same strategy will extend to other geometric flows of interest, such as hyperbolic MCF and Willmore flow.
We observe also that applying this auxiliary variable framework without the tangential motion specified by MDR re-derives the BGN semi-discretization;
however, without the mesh stability provided by MDR, the Newton iterates in an implicit CPG discretization of the BGN scheme fail to converge.

The conservative and dissipative properties of CPG integrators, i.e.~their ability to replicate variational arguments on the discrete level, have been observed on many different problems for decades~\cite{French_Schaeffer_1990,Betsch_Steinmann_2000a,Betsch_Steinmann_2000b,Egger_Habrich_Shashkov_2021}.
Within ODEs, the systematic use of auxiliary variables to facilitate these variational arguments in more general settings can be connected to discrete gradients, originally proposed by McLachlan, Quispel \& Robidoux ~\cite{McLachlan_Quispel_Robidoux_1999}, and later extended to arbitrary order in time by Hairer, Cohen \& Lubich~\cite{Cohen_Hairer_2011,Hairer_Lubich_2014} and to multiple invariants by the last two authors~\cite{Andrews_Farrell_2025a}.
We also refer the reader to \cite{Akrivis_Li_Tang_Zhang_2025} for recently proposed high-order space--time structure-preserving methods for the nonlinear Schr\"odinger equation.

The manuscript is organized as follows.
In \Cref{sec:notation} we fix notation, and introduce the geometric flows we consider.
\Cref{sec:stabilization} is devoted to the introduction of the MDR and dual-MDR formulations in the semi-discrete setting.
In \Cref{sec:time_discrete}, we apply a continuous Petrov--Galerkin discretization to obtain our full structure-preserving scheme, and prove stability in area and volume.
In \Cref{sec:numerics} we provide numerical results to verify these properties, and demonstrate the higher-order convergence rates in space and time.
\Cref{sec:conclusion} concludes with discussion and future work.

\section{Notation and preliminaries}\label{sec:notation}

For convenience, we henceforth omit the explicit statement of time dependence through the subscript $*_t$ for $\bfX$ and $\calM$ when it is clear from context. 

To construct our scheme, we approximate scalar quantities such as $\kappa$ in a finite-dimensional space $\bbV$ on $\widetilde{\calM}$, with $1 \in \bbV$;
one would then evaluate e.g.~$\kappa$ at $\bfx \in \calM$ by pullback as $\kappa(\bfX^{-1}(\bfx))$.
We approximate vector quantities such as $\bfX$ in $\bbV^d$.
In our numerical results in \Cref{sec:numerics}, we choose $\bbV$ to be the degree-$k$ Lagrange finite element space of piecewise polynomials over the fixed reference geometry $\widetilde{\calM}$~\cite{Bai2023,Bai2024,Garcke_et_al_2025b}.

Denote by $(\cdot, \cdot)_\calM$ the $L^2(\calM)$ inner product, and by $\|\cdot\|_\calM$ the $L^2(\calM)$ norm.
Note that this is dependent on $\calM$, and thus dependent on the state $\bfX$, which is itself non-constant in time.
Since all quadrature is performed by pullback to the constant reference manifold $\widetilde{\calM}$, the corresponding Jacobian must be included in the measure.

When considering time discretizations, let $0 = t_0 < t_1 < \cdots < t_N = T$, and define $T_n := (t_n, t_{n+1})$.
We denote by $\bbP_s(T_n; \bbV)$ the space of degree-$s$ polynomials from $T_n$ to $\bbV$.

\subsection{Mean curvature flow (MCF)}

The first geometric PDE we consider is mean curvature flow,
\begin{equation}\label{eq:mcf}
    \dot{\bfX} \cdot \bfn  =  \kappa,
\end{equation}
where (with a slight abuse of notation) we identify $\bfX = \mathrm{id}$ with a function on $\calM$.
In the continuous setting, the evolution of the surface area $S$ can be found as
\begin{multline}\label{eq:mcf_proof}
   \dot{S}
        = \int_\calM \div_\calM \dot{\bfX}
        = (\nabla_\calM \bfX, \nabla_\calM \dot{\bfX})_\calM
        = - \, (\Delta_\calM \bfX, \dot{\bfX})_\calM \\
        = - \, (\Delta_\calM \bfX\cdot \bfn, \dot{\bfX}\cdot \bfn)_\calM
        = - \, \|\kappa\|_\calM^2 \le 0,
\end{multline}
where $\div_\calM \dot{\bfX}$ denotes the tangential divergence of $\dot{\bfX}$ on $\calM$. 
Here the first and second equalities use the Reynolds transport theorem~\cite[Theorem~32, Lemma~9]{BGN20}, the third applies integration by parts, the fourth uses the normality of the velocity $\dot{\bfX}$, and the final equality applies the definition of $\kappa$ and the normal flow from the MCF equation \eqref{eq:mcf}.
This implies that the area $S$ is dissipated for MCF.


\subsection{Surface diffusion (SD)}

Our second system, surface diffusion~\cite{BGN20}, is similar to mean curvature flow \eqref{eq:mcf}, except that it takes the Laplacian of $\kappa$,
\begin{equation}\label{eq:sd}
    \dot{\bfX} \cdot \bfn  =  - \, \Delta_\calM \kappa.
\end{equation}
The evolution of the surface area $S$ can be computed similarly to \eqref{eq:mcf_proof}:
\begin{equation}
    \dot{S}
        = - \, (\Delta_\calM \bfX \cdot \bfn, \dot{\bfX} \cdot \bfn)_\calM
        = (\kappa, \Delta_\calM \kappa)_\calM
        = - \, \|\nabla_\calM \kappa\|_\calM^2 \le 0.
\end{equation}
The volume $V$ of the $(d+1)$-dimensional domain contained within $\calM$ evolves as~\cite[Theorem~33]{BGN20}
\begin{equation}\label{eq:sd_structures}
    \dot{V}
        = \int_\calM \dot{\bfX}\cdot\bfn
        = \int_\calM - \, \Delta_\calM \kappa = 0,
\end{equation}
where the last equality applies integration by parts.
Hence the enclosed volume is conserved, while area is dissipated.
It is advantageous that numerical discretizations of SD preserve the conservation of $V$.

\section{Stable semi-discretization in space}\label{sec:stabilization}

The framework presented in~\cite{Andrews_Farrell_2025b} provides a strategy for modifying a general PDE semi-discretization to preserve conservation laws and dissipation inequalities.
The core principle is the formulation of desirable structures as variational identities.
These variational identities can be preserved on the discrete level by (i)~systematically introducing certain special auxiliary variables, and (ii)~discretizing in time with a continuous Petrov--Galerkin (CPG) scheme.

To fix ideas, consider MCF \eqref{eq:mcf}.
As an initial didactic example of this framework, we begin with an abstract semi-discretization:
find $\bfX \in \bbV^d$ such that
\begin{equation}\label{eq:mcf_bgn_initial}
    (\dot{\bfX} \cdot \bfn, y)_\calM  =  (\kappa(\bfX), y)_\calM,
\end{equation}
for all $y \in \bbV$, where $\kappa(\bfX)$ is a function to be determined of the parametrization function $\bfX$, approximating the true mean curvature $\Delta_\calM \bfX \cdot \bfn$.
Our intention is to apply the framework of~\cite{Andrews_Farrell_2025b} to construct a mixed formulation of the above system that preserves the dissipation of area both (i)~on the semi-discrete level, and (ii)~after discretization in time with CPG.

\begin{remark}[Under-determination]
    The initial semi-discretization \eqref{eq:mcf_bgn_initial} is under-determined: the test space $\bbV$ is smaller in dimension than the solution space $\bbV^d$.
    This is a consequence of the motion $\dot{\bfX}$ \emph{tangential} to the manifold $\calM$ being unspecified.
    This particular semi-discrete problem thus technically lies slightly outside the framework as presented in~\cite{Andrews_Farrell_2025b}.
    Proceeding regardless derives a stable semi-discretization \eqref{eq:mcf_bgn} which loses the under-determined nature;
    after introduction of an auxiliary variable, the solution and test spaces in the final mixed form are identical.
    However this initial ill-posedness has persistent implications for the well-posedness of the CPG discretization in time.
    We discuss this in further detail at the end of this section.
\end{remark}

We seek to choose the discrete mean curvature approximation $\kappa(\bfX)$ so that the dissipation of area $S$ is preserved.
Consider again the first steps of \eqref{eq:mcf_proof} for the evolution of $S$,
\begin{equation}
    \dot{S}
        = \int_\calM \div_\calM \dot{\bfX}
        =  (\nabla_\calM \bfX, \nabla_\calM \dot{\bfX})_\calM
        =  - \, (\Delta_\calM \bfX, \dot{\bfX})_\calM.
\end{equation}
In the continuous setting, we are then able to write $- \, (\Delta_\calM \bfX, \dot{\bfX})_\calM = - \, (\Delta_\calM \bfX \cdot\bfn, \dot{\bfX} \cdot \bfn)_\calM$.
Taking $y = \Delta_\calM \bfX \cdot \bfn$ in \eqref{eq:mcf_bgn_initial} would then assert the dissipation of area;
in general however, $\Delta_\calM \bfX \cdot\bfn \not\in \bbV$, making this invalid as a choice of test function on the discrete level.
The framework of~\cite{Andrews_Farrell_2025b} thus proposes introducing into the semi-discretization an auxiliary variable $\kappa$, approximating $\Delta_\calM\bfX \cdot \bfn$ within the test space $\bbV$, such that it \emph{is} a valid choice of test function.
The definition of $\kappa$ is specified precisely by~\cite{Andrews_Farrell_2025b} as a projection under the dual of the left-hand side of \eqref{eq:mcf_bgn_initial}:
find $\kappa \in \bbV$ such that
\begin{subequations}\label{eq:mcf_bgn}
\begin{equation}
    (\kappa\bfn, \bfLambda)_\calM  =  - \, (\nabla_\calM\bfX, \nabla_\calM\bfLambda)_\calM,
\end{equation}
for all $\bfLambda \in \bbV^d$.
With $\kappa$ so defined, we modify the primal system \eqref{eq:mcf_bgn_initial} to introduce $\kappa$ on the right-hand side:
find $\bfX \in \bbV^d$ such that
\begin{equation}
    (\dot{\bfX} \cdot \bfn, y)_\calM  =  (\kappa, y)_\calM,
\end{equation}
\end{subequations}
for all $y \in \bbV$.
This careful introduction of auxiliary variables allows us to ``pass by'' $\kappa$ when evaluating $\dot{S}$:
\begin{equation}
    \dot{S}
        =  (\nabla_\calM \bfX, \nabla_\calM \dot{\bfX})_\calM
        =  - \, (\kappa\bfn, \dot{\bfX})_\calM
        =  - \, \|\kappa\|_\calM^2  \le  0,
\end{equation}
where in the second equality we consider $\bfLambda = \dot{\bfX}$, and in the third and final equality we consider $y = \kappa$.
Thus, this \emph{mixed} semi-discretization \eqref{eq:mcf_bgn} can be seen to preserve the dissipation of area.

The above semi-discretization clearly aligns with the BGN method~\cite{Barrett_Garcke_Nurnberg_2007,Barrett_Garcke_Nurnberg_2008a,Barrett_Garcke_Nurnberg_2008b}, which is well-known to be area-decreasing.
While this stability result would theoretically extend naturally to CPG discretizations in time, it has been observed that the Newton iterates in nonlinear time discretizations of \eqref{eq:mcf_bgn} struggle to converge~\cite[Remark 3.1]{barrett2011approximation}, with the system becoming increasingly ill-conditioned on smaller timesteps.
Carefully constructed linear discretizations that preserve the dissipation of area are preferred~\cite{BGN20}; however these can still cause severe mesh deformation and fail to simulate the true geometric behavior~\cite{Duan2024}.
This can be attributed to the ill-posedness of the initial under-determined abstract semi-discrete problem \eqref{eq:mcf_bgn_initial}.
One solution is to specify the tangential motion at the initial step, before applying the auxiliary variable framework.

\subsection{Mesh stabilization: MDR and dual-MDR}

The MDR approach~\cite{Hu_Li_2022,Gao_Li_Tang_2026} constrains the tangential motion to that which minimizes a deformation rate $\|\nabla_\calM\dot{\bfX}\|_\calM^2$, by positing, on the continuous level, the existence of a scalar field $p$ such that
\begin{equation}\label{eq:MDR}
    \Delta_\calM\dot{\bfX} = p\bfn.
\end{equation}
MDR presents a general, approachable strategy for improving mesh quality.
It is universal, in so far as it is not restricted to a certain geometric flow such as MCF or SD, and on the continuous level will have no effect on the normal motion of $\dot{\bfX}$, and consequently no effect on the fundamental dynamics of the manifold $\calM$.
We now apply the auxiliary variable framework~\cite{Andrews_Farrell_2025b} to an abstract MDR semi-discretization to preserve the dissipation of area.

Again fixing ideas with MCF \eqref{eq:mcf}, one begins with an abstract MDR semi-discretization:
find $(p, \bfX) \in \bbV \times \bbV^d$ such that
\begin{subequations}\label{eq:mcf_mdr_initial}
\begin{align}
    (\dot{\bfX} \cdot \bfn, y)_\calM  &=  (\kappa(\bfX), y)_\calM,  \\
    (\nabla_\calM\dot{\bfX}, \nabla_\calM\bfQ)_\calM + (p\bfn, \bfQ)_\calM  &=  0,
\end{align}
\end{subequations}
for all $(y, \bfQ) \in \bbV \times \bbV^d$.
Taking $y = \Delta_\calM \bfX \cdot \bfn$, or more specifically $(y, \bfQ) = (\Delta_\calM \bfX \cdot\bfn, \bfzero)$, in \eqref{eq:mcf_mdr_initial} would then theoretically prove area dissipation.
The framework of~\cite{Andrews_Farrell_2025b} thus proposes introducing auxiliary variables $(\kappa, \bfR)$, approximating $(\Delta_\calM\bfX \cdot \bfn, \bfzero)$ within $\bbV \times \bbV^d$.
The definition of $(\kappa, \bfR)$ is specified by~\cite{Andrews_Farrell_2025b}:
find $(\kappa, \bfR) \in \bbV \times \bbV^d$ such that
\begin{subequations}\label{eq:mcf_mdr}
\begin{align}
    (\bfR\cdot\bfn, s)_\calM  &=  0,  \label{eq:mcf_mdr_a}  \\
    (\nabla_\calM\bfR, \nabla_\calM\bfLambda)_\calM + (\kappa\bfn, \bfLambda)_\calM  &=  - \, (\nabla_\calM\bfX, \nabla_\calM\bfLambda)_\calM,  \label{eq:mcf_mdr_b}
\end{align}
for all $(s, \bfLambda) \in \bbV \times \bbV^d$.
We modify the primal MDR system \eqref{eq:mcf_mdr_initial} to introduce $\kappa$:
find $(p, \bfX) \in \bbV \times \bbV^d$ such that
\begin{align}
    (\dot{\bfX} \cdot \bfn, y)_\calM  &=  (\kappa, y)_\calM,  \label{eq:mcf_mdr_c}  \\
    (\nabla_\calM\dot{\bfX}, \nabla_\calM\bfQ)_\calM + (p\bfn, \bfQ)_\calM  &=  0,  \label{eq:mcf_mdr_d}
\end{align}
\end{subequations}
for all $(y, \bfQ) \in \bbV \times \bbV^d$.
We thus evaluate $\dot{S}$ for \eqref{eq:mcf_mdr}:
\begin{multline}\label{eq:mcf_mdr_proof}
    \dot{S}
        =  (\nabla_\calM \bfX, \nabla_\calM \dot{\bfX})_\calM
        =  - \, (\nabla_\calM\bfR, \nabla_\calM\dot{\bfX})_\calM - (\kappa\bfn, \dot{\bfX})_\calM  \\
        =  (p\bfn, \bfR)_\calM - (\kappa\bfn, \dot{\bfX})_\calM
        =  - \, \|\kappa\|_\calM^2  \le  0,
\end{multline}
where in the second equality we consider $\bfLambda = \dot{\bfX}$, in the third we consider $\bfQ = \bfR$, and in the final equality we consider $(y, s) = (\kappa, p)$.
This \emph{mixed MDR} semi-discretization \eqref{eq:mcf_mdr} can thus be seen to preserve the dissipation of area, while inheriting the mesh stability property of MDR.
This stability result then extends naturally to CPG discretizations in time (see \Cref{sec:time_discrete} below).

Note, in the continuous setting the variable $\bfR$ will evaluate to $\bfzero$.
In the discrete setting however, $\bfR \ne \bfzero$ in general, and is required to guarantee area stability.
This approach of systematically introducing auxiliary variables motivates and recovers the recently proposed dual-MDR formulation~\cite{Gao_Li_Tang_2026} for mixed methods within MDR, wherein the above semi-discretization \eqref{eq:mcf_mdr} is proposed.
The authors consider a carefully defined discretization in time of \eqref{eq:mcf_mdr}, similar to implicit Euler, that is (i)~linear, and (ii)~provably dissipative in the area for the parabolic MCF and SD systems~\cite[Theorem~2.2]{Gao_Li_Tang_2026}.
The proof of area stability relies on the inequality,
\begin{equation}
    S(t_{n+1}) - S(t_n)
        \le  (\nabla_{\calM_{t_n}}\bfX_{t_{n+1}}, \nabla_{\calM_{t_n}}[\bfX_{t_{n+1}} - \bfX_{t_n}])_{\calM_{t_n}},
\end{equation}
valid for linear finite elements~\cite[(2.31)]{Barrett_Garcke_Nurnberg_2007},~\cite[(2.21)]{Barrett_Garcke_Nurnberg_2008a}.
The extent to which this inequality is not strict quantifies additional, artificial area dissipation in their proposed scheme.

We now state the dual-MDR semidiscrete systems for MCF~\eqref{eq:mcf} and SD~\eqref{eq:sd}.
In both cases, for brevity their structure-preserving properties are presented and proven after time discretization in \Cref{sec:time_discrete}.

\subsection{Mean curvature flow (MCF)}

As motivated above, the semi-discrete dual-MDR formulation of MCF is as follows:

\begin{definition}[Dual-MDR MCF semi-discretization]
    Find $(\bfX, p, \bfR, \kappa) \in \bbV^d \times \bbV \times \bbV^d \times \bbV$ such that
    \begin{subequations}\label{eq:mcf_semidiscrete}
    \begin{align}
        (\dot{\bfX} \cdot \bfn, y)_\calM  &=  (\kappa, y)_\calM,  \label{eq:mcf_semidiscrete_a}  \\
        (\nabla_\calM\dot{\bfX}, \nabla_\calM\bfQ)_\calM + (p\bfn, \bfQ)_\calM  &=  0,  \label{eq:mcf_semidiscrete_b}  \\
        (\bfR\cdot\bfn, s)_\calM  &=  0,  \label{eq:mcf_semidiscrete_c}  \\
        (\nabla_\calM\bfR, \nabla_\calM\bfLambda)_\calM + (\kappa \bfn, \bfLambda)_\calM  &=  - \, (\nabla_\calM \bfX, \nabla_\calM \bfLambda)_\calM,  \label{eq:mcf_semidiscrete_d}
    \end{align}
    \end{subequations}
    for all $(y, \bfQ, s, \bfLambda) \in \bbV \times \bbV^d \times \bbV \times \bbV^d$.
\end{definition}

In the continuous limit, the above semi-discrete system can be seen to correspond to the strong form equations,
\begin{subequations}\label{eq:mcf_mdr_strong}
\begin{align}
    \dot{\bfX} \cdot \bfn  &=  \kappa,  &
    \bfR\cdot\bfn  &=  0,  \\
    \Delta_\calM \dot{\bfX}  &=  p\bfn,  &
    \kappa \bfn  &=  \Delta_\calM \bfX + \Delta_\calM \bfR.
\end{align}
\end{subequations}

\subsection{Surface diffusion (SD)}

Similar to the above, the semi-discrete dual-MDR formulation of SD is as follows:

\begin{definition}[Dual-MDR SD semi-discretization]
    Find $(\bfX, p, \bfR, \kappa) \in \bbV^d \times \bbV \times \bbV^d \times \bbV$ such that
    \begin{subequations}\label{eq:sd_semidiscrete}
    \begin{align}
        (\dot{\bfX} \cdot \bfn, y)_\calM  &=  (\nabla_\calM \kappa, \nabla_\calM y)_\calM,  \label{eq:sd_semidiscrete_a}  \\
        (\nabla_\calM\dot{\bfX}, \nabla_\calM\bfQ)_\calM + (p\bfn, \bfQ)_\calM  &=  0,  \label{eq:sd_semidiscrete_b}  \\
        (\bfR\cdot\bfn, s)_\calM  &=  0,  \label{eq:sd_semidiscrete_c}  \\
        (\nabla_\calM\bfR, \nabla_\calM\bfLambda)_\calM + (\kappa \bfn, \bfLambda)_\calM  &=  - \, (\nabla_\calM \bfX, \nabla_\calM \bfLambda)_\calM,  \label{eq:sd_semidiscrete_d}
    \end{align}
    \end{subequations}
    for all $(y, \bfQ, s, \bfLambda) \in \bbV \times \bbV^d \times \bbV \times \bbV^d$.
\end{definition}

Again in the limit, the above semi-discrete system corresponds with the strong form equations,
\begin{subequations}
\begin{align}
    \dot{\bfX} \cdot \bfn  &=  - \Delta_\calM \kappa,  &
    \bfR\cdot\bfn  &=  0,  \\
    \Delta_\calM \dot{\bfX}  &=  p\bfn,  &
    \kappa \bfn  &=  \Delta_\calM \bfX + \Delta_\calM \bfR.
\end{align}
\end{subequations}
Note, we need not apply the framework of~\cite{Andrews_Farrell_2025b} again to preserve the conservation of volume for this SD semi-discretization, as it can be seen to hold already without modification by considering $y = 1$.

\section{Stable CPG discretization in space and time}\label{sec:time_discrete}

On each timestep $T_n$, we apply a CPG discretization to each of our semi-discretizations to obtain a fully discrete system.
These CPG discretizations, theoretically possible for any number of stages $s$, are shown to preserve each volume $V$ and area $S$ stability result considered above.

\subsection{Mean curvature flow (MCF)}

\begin{definition}[Dual-MDR MCF CPG discretization]
    For each $n$, find $\bfX \in \bbP_s(T_n; \bbV^d)$ satisfying known initial data at $t_n$, and $(p, \bfR, \kappa) \in \bbP_{s-1}(T_n; \bbV \times \bbV^d \times \bbV)$ such that
    \begin{subequations}\label{eq:mcf_cpg}
    \begin{align}
        \int_{T_n} (\dot{\bfX} \cdot \bfn, y)_\calM  &=  \int_{T_n} (\kappa, y)_\calM,  \label{eq:mcf_cpg_a}  \\
        \int_{T_n} \Big[ (\nabla_\calM\dot{\bfX}, \nabla_\calM\bfQ)_\calM + (p\bfn, \bfQ)_\calM \Big]  &=  0,  \label{eq:mcf_cpg_b}  \\
        \int_{T_n} (\bfR\cdot\bfn, s)_\calM  &=  0,  \label{eq:mcf_cpg_c}  \\
        \int_{T_n} \Big[ (\nabla_\calM\bfR, \nabla_\calM\bfLambda)_\calM + (\kappa \bfn, \bfLambda)_\calM \Big]  &=  - \, \int_{T_n} (\nabla_\calM \bfX, \nabla_\calM \bfLambda)_\calM,  \label{eq:mcf_cpg_d}
    \end{align}
    \end{subequations}
    for all $(y, \bfQ, s, \bfLambda) \in \bbP_{s-1}(T_n; \bbV \times \bbV^d \times \bbV \times \bbV^d)$.
\end{definition}

To prove area dissipation, we first note the following lemma.

\begin{lemma}[Orthogonality of $\dot{\bfX}$ and $\bfR$]\label{lem:orthogonality}
    Any discrete solution to \eqref{eq:mcf_cpg} satisfies, on each slab $T_n$,
    \begin{equation}
        \int_{T_n} (\nabla_\calM\dot{\bfX}, \nabla_\calM\bfR)_\calM = 0.
    \end{equation}
\end{lemma}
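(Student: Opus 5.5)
The plan is to exploit the fact that $\bfR$ is itself an admissible test function in \eqref{eq:mcf_cpg_b}, and that $p$ is admissible in \eqref{eq:mcf_cpg_c}, mirroring the semi-discrete computation \eqref{eq:mcf_mdr_proof}. Since $\bfR \in \bbP_{s-1}(T_n; \bbV^d)$ by construction, we may take $\bfQ = \bfR$ in \eqref{eq:mcf_cpg_b}. This gives
\begin{equation*}
    \int_{T_n} (\nabla_\calM\dot{\bfX}, \nabla_\calM\bfR)_\calM = - \int_{T_n} (p\bfn, \bfR)_\calM .
\end{equation*}

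Next, since $p \in \bbP_{s-1}(T_n; \bbV)$, the choice $s = p$ (the test function, not the stage count) is admissible in \eqref{eq:mcf_cpg_c}. It yields
\begin{equation*}
    \int_{T_n} (\bfR\cdot\bfn, p)_\calM = 0 .
\end{equation*}
Because $(p\bfn, \bfR)_\calM = (\bfR\cdot\bfn, p)_\calM$ pointwise in time, the right-hand side of the first identity vanishes, and the claim follows.

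The only point requiring care is admissibility. We must check that the trial spaces for $p$ and $\bfR$ coincide exactly with the corresponding test spaces in the CPG formulation, namely $\bbP_{s-1}$ in time with values in $\bbV$ and $\bbV^d$. This holds by the definition. Note also that $\bfn$ and the measure depend on $\bfX(t)$, but this causes no difficulty: both identities are evaluated with the same time-dependent inner product $(\cdot,\cdot)_{\calM_t}$ and integrated over $T_n$. No quadrature or polynomial-degree argument is needed, provided the time integrals are taken exactly, as written in the definition.
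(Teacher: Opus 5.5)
Your proposal is correct and follows the paper's proof exactly: test \eqref{eq:mcf_cpg_b} with $\bfQ = \bfR$ and \eqref{eq:mcf_cpg_c} with $s = p$, then subtract, using $(p\bfn, \bfR)_\calM = (\bfR\cdot\bfn, p)_\calM$. Your admissibility check ($p$ and $\bfR$ lie in the degree-$(s-1)$ test spaces) and your caveat that the time integrals are taken exactly are both correct, and the paper uses the same justification.
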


\begin{proof}
    Testing the second equation \eqref{eq:mcf_cpg_b} with $\bfQ = \bfR$ yields
    \begin{subequations}
    \begin{equation}
        \int_{T_n} \Big[(\nabla_\calM\dot{\bfX}, \nabla_\calM\bfR)_\calM + (p\bfn, \bfR)_\calM\Big] = 0.
    \end{equation}
    Testing the third equation \eqref{eq:mcf_cpg_c} with $s = p$ yields
    \begin{equation}
        \int_{T_n}(\bfR\cdot\bfn, p)_\calM = 0.
    \end{equation}
    \end{subequations}
    Taking the difference of these identities gives the desired result.
\end{proof}

With this result, we are able to show the proposed MCF discretization \eqref{eq:mcf_cpg} dissipates area.

\begin{theorem}[Stability of MCF discretization]\label{thm:mcf_structures}
    Any discrete solution to \eqref{eq:mcf_cpg} preserves the dissipation of area $S$,
    \begin{equation}
        S(t_{n+1}) - S(t_n)  =  - \int_{T_n}  \|\kappa\|_\calM^2 \le 0.
    \end{equation}
    Moreover, the dissipation rate $- \, \|\kappa\|_\calM^2$ mimics that of the continuous system \eqref{eq:mcf_proof}.
\end{theorem}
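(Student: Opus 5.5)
The plan is to replicate the semi-discrete calculation \eqref{eq:mcf_mdr_proof} in time-integrated form. By the fundamental theorem of calculus, $S(t_{n+1}) - S(t_n) = \int_{T_n} \dot S$, and $\dot S = (\nabla_\calM \bfX, \nabla_\calM \dot{\bfX})_\calM$ holds pointwise in time by the Reynolds transport theorem, as used in \eqref{eq:mcf_proof}. This identity is a statement about the parametrization at each fixed $t$: the area is $S = \int_{\widetilde{\calM}} J(\bfX)$, and differentiating the Jacobian gives the tangential divergence. It therefore applies to the piecewise-polynomial-in-time discrete $\bfX \in \bbP_s(T_n;\bbV^d)$, since $\bfX$ is smooth on $T_n$ and continuous at $t_n$.

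The crux is choosing admissible test functions. Since $\bfX \in \bbP_s(T_n;\bbV^d)$, we have $\dot{\bfX} \in \bbP_{s-1}(T_n;\bbV^d)$, so $\bfLambda = \dot{\bfX}$ is a valid test function in \eqref{eq:mcf_cpg_d}. This gives
\begin{equation*}
\int_{T_n} (\nabla_\calM \bfX, \nabla_\calM \dot{\bfX})_\calM = -\int_{T_n} \Big[(\nabla_\calM \bfR, \nabla_\calM \dot{\bfX})_\calM + (\kappa\bfn, \dot{\bfX})_\calM\Big].
\end{equation*}
\Cref{lem:orthogonality} removes the first term on the right, leaving $-\int_{T_n} (\kappa, \dot{\bfX}\cdot\bfn)_\calM$. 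Since $\kappa \in \bbP_{s-1}(T_n;\bbV)$, we may take $y = \kappa$ in \eqref{eq:mcf_cpg_a}, which yields $\int_{T_n} (\dot{\bfX}\cdot\bfn,\kappa)_\calM = \int_{T_n}\|\kappa\|_\calM^2$. Chaining these identities gives $S(t_{n+1}) - S(t_n) = -\int_{T_n}\|\kappa\|_\calM^2 \le 0$. The rate $-\|\kappa\|_\calM^2$ matches \eqref{eq:mcf_proof} with $\kappa$ the discrete curvature, which establishes the final remark of the theorem.

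The main point needing care is that in CPG the nonlinear, $\bfX$-dependent quantities ($\bfn$, the measure, $\nabla_\calM$) are integrated exactly in time. The argument relies only on the test functions lying in $\bbP_{s-1}$, not on the integrands being polynomial. Accordingly, I will state that the identities are meant with exact time integration, or with quadrature applied consistently to both $\int_{T_n}\dot S$ and the scheme. Under quadrature, $\int_{T_n}\dot S = S(t_{n+1})-S(t_n)$ can fail, so exactness is what makes the equality hold. I will also check that $\dot{\bfX}$ is the correct time derivative, so that the Reynolds transport identity $\dot S = (\nabla_\calM\bfX,\nabla_\calM\dot{\bfX})_\calM$ holds with $\calM = \bfX_t(\widetilde{\calM})$ on the open slab. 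This follows from $\frac{\rmd}{\rmd t}\sqrt{\det G} = \sqrt{\det G}\,\div_\calM \dot{\bfX}$ with $G$ the metric tensor.
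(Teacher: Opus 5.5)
Your proposal is correct and follows the paper's own proof. It tests \eqref{eq:mcf_cpg_d} with $\bfLambda = \dot{\bfX}$ (admissible since $\dot{\bfX} \in \bbP_{s-1}(T_n;\bbV^d)$), removes the $\bfR$ term via \Cref{lem:orthogonality}, tests \eqref{eq:mcf_cpg_a} with $y = \kappa$, and justifies $\dot{S} = (\nabla_\calM\bfX, \nabla_\calM\dot{\bfX})_\calM$ through the Jacobian of the parametrization, as in the appendix; your caveat about exact time integration matches the paper's condition (iv) in its time-quadrature discussion.
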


\begin{proof}
    Testing with $\bfLambda = \dot{\bfX}$ in \eqref{eq:mcf_cpg_d} and $y = \kappa$ in \eqref{eq:mcf_cpg_a},
    \begin{subequations}
    \begin{align}
        S(t_{n+1}) - S(t_n)
            = \int_{T_n} \dot{S}
            &= \int_{T_n}(\nabla_\calM\bfX, \nabla_\calM\dot{\bfX})_\calM  \label{eq:discrete_reynolds}  \\
            &=-  \, \int_{T_n}(\kappa \bfn, \dot{\bfX})_{\calM}- \int_{T_n} (\nabla_\calM\bfR, \nabla_\calM \dot{\bfX})_\calM  \\
            &=  - \, \int_{T_n} \|\kappa\|_\calM^2 \le  0,
    \end{align}
    \end{subequations}
    where we eliminate $\int_{T_n} (\nabla_\calM\dot{\bfX}, \nabla_\calM\bfR)_\calM$ after the third equality via \Cref{lem:orthogonality}.
    We are able to consider $\bfLambda = \dot{\bfX}$ as $\bfX \in \bbP_s(T_n; \bbV^d)$ implies $\dot{\bfX} \in \bbP_{s-1}(T_n; \bbV^d)$, the test set containing $\bfLambda$.
\end{proof}



A detailed derivation of the surface area evolution identity $\dot{S} = (\nabla_\calM\bfX, \nabla_\calM\dot{\bfX})_\calM$ \eqref{eq:discrete_reynolds} on general parameterized finite-element surfaces is provided in \Cref{app:area_identity_parameterized}.


\subsection{Surface diffusion (SD)}

\begin{definition}[Dual-MDR SD CPG discretization]
    For each $n$, find $\bfX \in \bbP_s(T_n; \bbV^d)$ satisfying known initial data at $t_n$, and $(p, \bfR, \kappa) \in \bbP_{s-1}(T_n; \bbV \times \bbV^d \times \bbV)$ such that
    \begin{subequations}\label{eq:sd_cpg}
    \begin{align}
        \int_{T_n} (\dot{\bfX} \cdot \bfn, y)_\calM  &=  \int_{T_n} (\nabla_\calM \kappa, \nabla_\calM y)_\calM,  \label{eq:sd_cpg_a}  \\
        \int_{T_n} \Big[ (\nabla_\calM\dot{\bfX}, \nabla_\calM\bfQ)_\calM + (p\bfn, \bfQ)_\calM \Big]  &=  0,  \label{eq:sd_cpg_b}  \\
        \int_{T_n} (\bfR\cdot\bfn, s)_\calM  &=  0,  \label{eq:sd_cpg_c}  \\
        \int_{T_n} \Big[ (\nabla_\calM\bfR, \nabla_\calM\bfLambda)_\calM + (\kappa \bfn, \bfLambda)_\calM \Big]  &=  - \, \int_{T_n} (\nabla_\calM \bfX, \nabla_\calM \bfLambda)_\calM,  \label{eq:sd_cpg_d}
    \end{align}
    \end{subequations}
    for all $(y, \bfQ, s, \bfLambda) \in \bbP_{s-1}(T_n; \bbV \times \bbV^d \times \bbV \times \bbV^d)$.
\end{definition}

Since the second \eqref{eq:sd_cpg_b} and third \eqref{eq:sd_cpg_c} equations are unchanged from \eqref{eq:mcf_cpg}, \Cref{lem:orthogonality} carries over identically to \eqref{eq:sd_cpg}.
We show then the proposed SD discretization \eqref{eq:sd_cpg} both conserves volume and dissipates area.

\begin{theorem}[Stability of SD discretization]\label{thm:sd_structures}
    Any discrete solution to \eqref{eq:sd_cpg} preserves the conservation of volume $V$, and dissipation of area $S$,
    \begin{equation}
        V(t_{n+1}) - V(t_n)  =  0,  \qquad
        S(t_{n+1}) - S(t_n)  =  - \int_{T_n} \|\nabla_\calM \kappa\|_\calM^2 \le 0.
    \end{equation}
    Again, the correct area dissipation rate $- \, \|\nabla_\calM \kappa\|_\calM^2$ \eqref{eq:sd_structures} holds.
\end{theorem}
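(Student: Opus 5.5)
The proof splits into two independent parts, volume and area, each obtained by choosing a test function in \eqref{eq:sd_cpg} that lies in the CPG test space $\bbP_{s-1}(T_n;\cdot)$.

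\textbf{Volume.} By the fundamental theorem of calculus and the transport identity used in \eqref{eq:sd_structures}, valid for the discrete parametrized surface, we have
\begin{equation*}
    V(t_{n+1}) - V(t_n) = \int_{T_n} \dot V = \int_{T_n} (\dot{\bfX}\cdot\bfn, 1)_\calM .
\end{equation*}
Since $1 \in \bbV$, the constant-in-time function $y = 1$ belongs to $\bbP_{s-1}(T_n;\bbV)$. Testing \eqref{eq:sd_cpg_a} with $y=1$ gives $\int_{T_n}(\nabla_\calM\kappa, \nabla_\calM 1)_\calM = 0$, because $\nabla_\calM 1 = \bfzero$. The one point that needs care is that the identity $\dot V = (\dot{\bfX}\cdot\bfn, 1)_\calM$ must hold pointwise in time for the discrete (piecewise polynomial) surface. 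I would justify this from the divergence-theorem representation $V = \frac{1}{d+1}\int_\calM \bfX\cdot\bfn$, pulled back to $\widetilde{\calM}$ and differentiated in time, in the same spirit as the area identity of \Cref{app:area_identity_parameterized}. Alternatively, I would simply cite \cite[Theorem~33]{BGN20} for this.

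\textbf{Area.} I would follow the proof of \Cref{thm:mcf_structures} verbatim. First, $S(t_{n+1})-S(t_n) = \int_{T_n}(\nabla_\calM\bfX,\nabla_\calM\dot{\bfX})_\calM$ by \eqref{eq:discrete_reynolds}. Next, test \eqref{eq:sd_cpg_d} with $\bfLambda = \dot{\bfX}$, which is admissible since $\dot{\bfX}\in\bbP_{s-1}(T_n;\bbV^d)$. This rewrites the right-hand side as
\begin{equation*}
    -\int_{T_n}(\kappa\bfn,\dot{\bfX})_\calM - \int_{T_n}(\nabla_\calM\bfR,\nabla_\calM\dot{\bfX})_\calM .
\end{equation*}
The second term vanishes by \Cref{lem:orthogonality}, which carries over to \eqref{eq:sd_cpg} as noted above. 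Finally, test \eqref{eq:sd_cpg_a} with $y=\kappa\in\bbP_{s-1}(T_n;\bbV)$. This gives $\int_{T_n}(\dot{\bfX}\cdot\bfn,\kappa)_\calM = \int_{T_n}\|\nabla_\calM\kappa\|_\calM^2$, and hence
\begin{equation*}
    S(t_{n+1})-S(t_n) = -\int_{T_n}\|\nabla_\calM\kappa\|_\calM^2 \le 0 .
\end{equation*}

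No step is a real obstacle: everything reduces to admissibility of the test functions, namely the degree count for $\dot{\bfX}$ and the fact that $1\in\bbV$. The only non-algebraic ingredient is the pointwise-in-time transport identities for $S$ and $V$ on the discrete surface, which we take from the appendix and from \cite{BGN20}, respectively.
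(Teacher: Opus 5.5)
Your proposal is correct and follows the paper's proof essentially verbatim: volume via $y=1$ in \eqref{eq:sd_cpg_a}, and area via $\bfLambda=\dot{\bfX}$ in \eqref{eq:sd_cpg_d}, \Cref{lem:orthogonality}, and then $y=\kappa$ in \eqref{eq:sd_cpg_a}. For the pointwise identity $\dot V = \int_\calM \dot{\bfX}\cdot\bfn$, prefer your first justification over citing \cite[Theorem~33]{BGN20}, since standard transport theorems assume $C^2$ regularity; the paper instead proves the identity in \Cref{app:volume_identity_parameterized} cell by cell, with the facet boundary terms cancelling between neighbouring cells.
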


\begin{proof}
    For volume, testing with $y = 1$ in the first equation \eqref{eq:sd_cpg_a} yields
    \begin{subequations}
    \begin{align}
        V(t_{n+1}) - V(t_n)
            =  \int_{T_n} \dot{V}
            &=  \int_{T_n} \int_\calM \dot{\bfX} \cdot \bfn  \label{eq:discrete_reynolds_volume}  \\
            &=  \int_{T_n} (\dot{\bfX} \cdot \bfn, 1)_\calM  \\
            &=  \int_{T_n} (\nabla_\calM\kappa, \nabla_\calM 1)_\calM
            =  0.
    \end{align}
    \end{subequations}
    For area, the strategy is similar to the MCF case (see \Cref{thm:mcf_structures}).
    Testing with $\bfLambda = \dot{\bfX}$ in the last equation \eqref{eq:sd_cpg_d} and $y = \kappa$ in the first \eqref{eq:sd_cpg_a},
    \begin{subequations}
    \begin{align}
        S(t_{n+1}) - S(t_n)
            = \int_{T_n}\dot{S}
            &= \int_{T_n}(\nabla_\calM\bfX, \nabla_\calM\dot{\bfX})_\calM  \\
            &=-  \, \int_{T_n}(\kappa \bfn, \dot{\bfX})_{\calM}- \int_{T_n} (\nabla_\calM\bfR, \nabla_\calM \dot{\bfX})_\calM  \\
            &=  - \, \int_{T_n} \|\nabla_\calM \kappa\|_\calM^2 \le  0.
    \end{align}
    \end{subequations}
    Again, we eliminate $\int_{T_n} (\nabla_\calM\dot{\bfX}, \nabla_\calM\bfR)_\calM$ via \Cref{lem:orthogonality}.
\end{proof}

A detailed derivation of the volume $V$ evolution identity $\dot{V} = \int_\calM \dot{\bfX}\cdot\bfn$ \eqref{eq:discrete_reynolds_volume} on general parameterized finite element surfaces is provided in \Cref{app:volume_identity_parameterized}.

\subsection{Time quadrature and sufficient conditions for stability}\label{sec:time_quadrature}

In theory, CPG demands that the time integrals in each of these schemes \eqref{eq:mcf_cpg} and \eqref{eq:sd_cpg} are evaluated exactly.
In practice, however, quadrature is required;
some terms cannot be integrated exactly.
An analysis of which terms can be underintegrated without affecting structure preservation is given in \cite{Andrews_Farrell_2025b}.
The summary of this analysis is that most terms can be underintegrated (with mild constraints on the quadrature rule, like having positive weights) without affecting structure preservation, but that quadrature errors on the right-hand sides of the auxiliary variable equations (i.e.~\eqref{eq:mcf_cpg_d}, \eqref{eq:sd_cpg_d}) do affect it.
If these terms are not polynomial and thus cannot be integrated exactly, we employ a higher-order quadrature only for these terms so that the resulting errors are on the same order as those necessarily incurred by inexact nonlinear solves.
In the following subsections we specify precisely which terms must be so integrated so as not to affect structure preservation.

\subsubsection{Volume conservation in SD}

It is sufficient for volume conservation in SD that the quadrature approximation to $\int_{T_n}(\dot{\bfX}\cdot\bfn, y)_\calM$ evaluates exactly to $V(t_{n+1}) - V(t_n)$ when $y = 1$.

\begin{theorem}[Quadrature on $\int_{T_n} (\dot{\bfX}\cdot\bfn, y)_\calM$]
    Time quadrature of degree $ds + s - 1$ in time applied to the term $\int_{T_n}(\dot{\bfX}\cdot\bfn, y)_\calM$ is sufficient to guarantee volume conservation in SD \eqref{eq:sd_cpg}.
    Moreover, quadrature of degree $ds + 2s - 2$ (greater than $ds + s - 1$ for $s > 1$) is sufficient to integrate this term exactly.
\end{theorem}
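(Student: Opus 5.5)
The plan is to compute the polynomial degree in time of the integrand after pulling back to the reference manifold $\widetilde{\calM}$.

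\textbf{Step 1: pull back to $\widetilde{\calM}$.} The key observation is that the product $\bfn \, \rmd\calM$, i.e.\ the unit normal times the surface measure, equals the (unnormalized) normal computed from the parametrization. In local reference coordinates this is the generalized cross product of the $d$ tangent vectors $\partial_1\bfX,\dots,\partial_d\bfX$: the vector of signed $d\times d$ minors of the Jacobian of $\bfX$. This identity is what underlies the volume identity proved in \Cref{app:volume_identity_parameterized}. Each minor is a homogeneous polynomial of degree $d$ in the entries of $\nabla\bfX$. Since $\bfX \in \bbP_s(T_n;\bbV^d)$ and spatial derivatives commute with time, each entry is a polynomial of degree $s$ in time. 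Hence $\bfn\,\rmd\calM$ is polynomial in time of degree at most $ds$.

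\textbf{Step 2: volume conservation.} Take $y=1$. The integrand is $\int_{\widetilde{\calM}} \dot{\bfX}\cdot(\bfn\,\rmd\calM)$, where $\dot{\bfX}$ has degree $s-1$ in time. The integrand is therefore a polynomial in $t$ of degree $ds+s-1$, with no rational factors because the normalization of $\bfn$ cancels against the Jacobian. A quadrature rule exact to degree $ds+s-1$ integrates it exactly, giving $\int_{T_n}\dot V = V(t_{n+1})-V(t_n)$ by \eqref{eq:discrete_reynolds_volume}. In the quadrature-evaluated scheme, testing \eqref{eq:sd_cpg_a} with $y=1$ then gives zero on the right-hand side exactly, because $\nabla_\calM 1 = 0$ pointwise at every quadrature node, whatever rule is used there. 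Volume conservation follows.

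\textbf{Step 3: exactness for general $y$.} For a general test function $y\in\bbP_{s-1}(T_n;\bbV)$ there is an extra factor of degree $s-1$. The integrand then has degree $ds+(s-1)+(s-1) = ds+2s-2$, so a rule of that degree is exact. We also note $ds+2s-2 > ds+s-1$ iff $s>1$.

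The main obstacle is making Step 1 rigorous on general (possibly curved, higher-order) parameterized elements: showing that $\bfn\,\rmd\calM$ pulled back is exactly the polynomial cofactor expression, with the orientation and sign conventions handled consistently. I would do this elementwise in local charts, write $\bfn\,|\det G|^{1/2}$ via the wedge product of the tangent vectors, and cite the appendix computation for the volume identity.
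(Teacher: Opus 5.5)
Your proposal is correct and follows essentially the same route as the paper: pull back to $\widetilde{\calM}$, use $\bfn J = \bfnu = \bigwedge_{i=1}^d \partial_{\rho_i}\bfX$, which is polynomial of degree $d$ in $\bfX$ and hence of degree $ds$ in time, then add $s-1$ for $\dot{\bfX}$ and a further $s-1$ for $y$, the latter dropped when $y=1$. Your explicit remark that the right-hand side $\int_{T_n}(\nabla_\calM\kappa,\nabla_\calM 1)_\calM$ vanishes under any quadrature rule is a small detail the paper leaves implicit.
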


\begin{proof}
    Recall that the inner product in the integrand $(\dot{\bfX}\cdot\bfn, y)_\calM$ is evaluated in practice not as an integral over $\calM$, but as an integral over the reference manifold $\widetilde{\calM}$, after pullback and multiplication by the Jacobian $J$ of the map $\bfX : \widetilde{\calM} \to \calM$ \eqref{eq:jacobian}.
    We show first that, after pullback, this integrand is polynomial in $\bfX$ and $y$.

    Suppose $\widetilde{\calM}$ and (consequently) $\calM$ are parameterized by reference coordinates $(\rho_1, \dots, \rho_d)$.
    The normal $\bfn$ on $\calM$ can be found as $\bfn = \bfnu / \|\bfnu\|$ for
    \begin{equation}\label{eq:nu}
        \bfnu  :=  \bigwedge_{i=1}^d \frac{\partial}{\partial\rho_i} \bfX,
    \end{equation}
    where $\bigwedge$ denotes the exterior product. 
    The Jacobian $J$ can be identified exactly with $\|\bfnu\|$.  
    Thus evaluating $(\dot{\bfX}\cdot\bfn, y)_\calM$ over the reference manifold,
    \begin{equation}\label{eq:use_nu}
        (\dot{\bfX}\cdot\bfn, y)_\calM
            =  (\dot{\bfX}\cdot\bfn\|\bfnu\|, y)_{\widetilde{\calM}}
            =  (\dot{\bfX}\cdot\bfnu, y)_{\widetilde{\calM}}.
    \end{equation}
    Over $\widetilde{\calM}$, the integrand reduces to a polynomial in $\dot{\bfX}$, $\bfnu$ and $y$.
    Since $\bfnu$ is itself a polynomial of degree $d$ in $\bfX$ \eqref{eq:nu}, what remains is an integrand that is ultimately polynomial.

    The order of this integrand in time can be identified as $ds + 2s - 2$\footnote{Two copies of $s-1$ from $\dot{\bfX}$ and $y$, plus $d$ copies of $s$ from $\bfnu  =  \bigwedge_{i=1}^d \frac{\partial}{\partial\rho_i} \bfX$.}, and thus can be integrated exactly with any chosen degree-$(ds + 2s - 2)$ quadrature rule.
    As stated above, to guarantee volume conservation we in fact only require the quadrature on $\int_{T_n}(\dot{\bfX}\cdot\bfn, y)_\calM$ to be exact when $y = 1$, reducing the order of the integrand to $ds + s - 1$.
\end{proof}

\begin{remark}[The intermediate normal technique~\cite{Bao_Zhao_2021}]
    For 1-stage methods ($s = 1$), $ds + 2s - 2 = d$.
    The intermediate normal technique of Bao \& Zhao is recovered in the \emph{curve} diffusion case ($d = 1$) when using a trapezium rule (degree $1$)~\cite[(2.10)]{Bao_Zhao_2021}, and in the \emph{surface} diffusion case ($d = 2$) when applying 3-stage Gauss--Lobatto quadrature (degree $3$)~\cite[(3.12)]{Bao_Zhao_2021}.
    For instance in the latter case of $d = 2$,
    \begin{equation}
        \int_{T_n} (\dot{\bfX} \cdot \bfn, y)_\calM
            =  \int_{T_n} (\dot{\bfX} \cdot \bfnu, y)_{\widetilde{\calM}}
            =  \frac{t_{n+1} - t_n}{6}\!\left[\begin{aligned}
                &(\dot{\bfX} \cdot \bfnu_{t_n}, y)_{\widetilde{\calM}}  \\
                &\qquad+ 4(\dot{\bfX} \cdot \bfnu_{t_{n+1/2}}, y)_{\widetilde{\calM}}  \\
                &\qquad\qquad+ (\dot{\bfX} \cdot \bfnu_{t_{n+1}}, y)_{\widetilde{\calM}}
            \end{aligned}\right]\!
    \end{equation}
    where in the first equality we use \eqref{eq:use_nu}, and in the second we apply 3-stage Gauss--Lobatto quadrature, noting that both $\dot{\bfX} = (\bfX_{t_{n+1}} - \bfX_{t_n})/(t_{n+1} - t_n)$ and $y$ are uniform in time over $T_n$.
    Defining the intermediate normal $\hat{\bfnu}_n := \tfrac{1}{6}(\bfnu_{t_n} + 4\bfnu_{t_{n+1/2}} + \bfnu_{t_{n+1}})$, this can be written as
    \begin{equation}
        \int_{T_n} (\dot{\bfX}\cdot\bfn, y)_\calM
            = ((\bfX_{t_{n+1}} - \bfX_{t_n})\cdot\hat{\bfnu}_n, y)_{\widetilde{\calM}},
    \end{equation}
    aligning with the formulation proposed by Bao \& Zhao~\cite[(3.12)]{Bao_Zhao_2021}.
\end{remark}

\subsubsection{Area dissipation in MCF and SD}

To guarantee area dissipation in both MCF \eqref{eq:mcf_cpg} and SD \eqref{eq:sd_cpg}, the same quadrature rule must be applied to (i)~$\int_{T_n} (\dot{\bfX} \cdot \bfn, y)_\calM$ and $\int_{T_n} (\kappa \bfn, \bfLambda)_\calM$, (ii)~$\int_{T_n} (\nabla_\calM \bfR, \nabla_\calM \bfLambda)_\calM$ and $\int_{T_n} (\nabla_\calM \dot{\bfX}, \nabla_\calM \bfQ)_\calM$, and (iii)~$\int_{T_n} (p\bfn, \bfQ)_\calM$ and $\int_{T_n} (\bfR\cdot\bfn, s)_\calM$.
The final two conditions ensure \Cref{lem:orthogonality} is preserved.

We require also that (iv)~the approximation to $\int_{T_n}(\nabla_\calM\bfX, \nabla_\calM\bfLambda)_\calM$ evaluates to $S(t_{n+1}) - S(t_n)$ when $\bfLambda = \dot{\bfX}$.
As far as we are aware, the integrand $(\nabla_\calM\bfX, \nabla_\calM\dot{\bfX})_\calM$ does not in general reduce to a polynomial in $t$ when evaluated over $\widetilde{\calM}$, rendering this final condition difficult to achieve.
In practice we evaluate this term with a higher-degree quadrature scheme, chosen so that quadrature error is on the order of the nonlinear solver tolerances.





\section{Numerical results}\label{sec:numerics}

In this section, we present numerical experiments that validate the structure-preserving properties of our proposed schemes (\ref{eq:mcf_cpg},~\ref{eq:sd_cpg}) and demonstrate their high-order convergence rates.
Each simulation considers surface flows ($d=2$), discretizing in space with degree-$k$ Lagrange finite elements (CG($k$)) for various $k\in\{1,2,3\}$.
All computations are carried out in \texttt{Firedrake}~\cite{rathgeber2016,ham2023c}, with the CPG time stepping implemented via \texttt{Irksome}~\cite{farrell2020b}.
The code for these experiments is open source and publicly available \cite{codes_repository}.

\subsection{Mean curvature flow (MCF)}

We consider first the proposed MCF discretization \eqref{eq:mcf_cpg}.

\subsubsection{Convergence tests}

We verify expected convergence orders for two initial geometries.
The first is the unit sphere $\calM_0 = \bbS^2$.
In the continuous case, the surface $\calM$ remains spherical under MCF for all $t \in [0,\tfrac{1}{4})$, with radius
\begin{equation}\label{eq:mcf-sphere-exact}
    R(t) = \sqrt{1 - 4t}.
\end{equation}
We test the convergence of our scheme through the error of the discrete solution with this reference solution at time $t = 0.05$.
The second is an asymmetric perturbation of the ellipsoid, obtained by mapping
each $(x,y,z) \in \widetilde{\calM} = \bbS^2$ to
\begin{equation}\label{eq:mcf-ellipsoid-IC}
    \bfX(x,y,z) =
        \big(2x + 0.5\,yz,\ 1.5\,y + 0.4\,xz,\ z + 0.35\,xy\big).
\end{equation}
This combines an axis-aligned anisotropic stretch with a small quadratic shear, breaking all reflective symmetries.
No exact solution is known for this geometry;
the error is therefore computed by comparison between adjacent refinement levels.

To measure the discrepancy between two closed surfaces $\calM_1,\calM_2$, we use the area-averaged closest-point distance, the ``mean error'' $\calE_M$ provided by the \texttt{dune-meshdist} library~\cite{dune-meshdist}:
\begin{equation}\label{eq:manifold-distance}
    \calE_M(\calM_1,\calM_2)
        := \frac{1}{S(\calM_1)} \int_{\calM_1}\! \mathrm{dist}\big(\bfx,\calM_2\big).
\end{equation}
Here $\mathrm{dist}(\bfx,\calM_2) := \min_{\bfy\in\calM_2}\|\bfx-\bfy\|$ is the minimum $L^2$ distance of $\bfx (\in \calM_1)$ from $\calM_2$, and $S(\calM_1)$ is the surface area of $\calM_1$\footnote{
    We refer the reader to~\cite[Appendix B]{Garcke_et_al_2025b} for the discussion of this error, and~\cite{jiang2024stable} for the relation of $\calE_M$ with the manifold distance~\cite{Zhao2021,Jiang23}.
    The implementation of this discrepancy can be found in \cite{codes_repository}.
    We note that two discrete surfaces $\calM_1$, $\calM_2$ produced by different discretizations generally carry different triangulations, rendering a direct vertex-by-vertex comparison inappropriate.
}.

Since the spatial and temporal errors combine additively, the total expected error of the CG($k$)--CPG($s$) method is $\calO(h^{k+1}+\tau^{2s})$, where $\tau$ measures the timestep (assumed to be uniform).
To probe each contribution in isolation, we refine the other enough so that it does not pollute the measured error.
For the \emph{spatial} test, we vary the icosahedral refinement $h$;
to place the temporal contribution one order below the smallest expected spatial error, we fix $s=1$ in time with $\tau \sim h^{(k+2)/2}$.
For the \emph{temporal} test, we vary the time step $\tau$;
we take a sufficiently fine mesh to ensure the spatial error sits well below the temporal floor, with fixed $k = 4$.

\Cref{fig:mcf-eoc-sphere,fig:mcf-eoc-ellipsoid} report the resulting convergence plots on the sphere $\bbS^2$ and on the asymmetric perturbed ellipsoid \eqref{eq:mcf-ellipsoid-IC} respectively.
On the sphere (\Cref{fig:mcf-eoc-sphere}), CG($1$) attains the expected slope~$2$, while CG($2$) and CG($3$) both attain slope~$4$.
The super-convergence of CG($2$) by one extra power is an artifact of the symmetric icosahedral discretization of the unit sphere:
it disappears on the perturbed ellipsoid (\Cref{fig:mcf-eoc-ellipsoid}), where CG($k$) recovers the expected slope~$k+1$ for $k\in\{1,2,3\}$ ($2$, $3$, and $4$ respectively).
Temporally, CPG($s$) attains the expected slope~$2s$ for $s\in\{1,2,3\}$ on both geometries, confirming the predicted high-order temporal accuracy of the proposed scheme beyond the symmetric setting.

\begin{figure}[!htbp]
    \centering
    \begin{tikzpicture}
        \pgfplotsset{
            every axis/.append style={
                width=0.48\linewidth, height=0.6\linewidth,
                xmode=log, ymode=log,
                xlabel style={font=\scriptsize, yshift=2pt},
                ylabel style={font=\scriptsize, yshift=-2pt},
                tick label style={font=\scriptsize},
                grid=both,
                grid style={dotted, gray!40},
                every axis plot/.append style={very thick},
                legend style={font=\tiny, draw=none, fill opacity=0.7,
                              inner sep=2pt, row sep=-2pt},
                legend cell align=left,
            },
        }
        \begin{groupplot}[
            group style={group size=2 by 1, horizontal sep=2cm},
        ]
        \nextgroupplot[
            xlabel={$h$},
            ylabel={$\calE_M$},
            legend pos=south east,
        ]
            \addplot[seabornblue!80!black, mark=*]
                table[x index=0, y index=1]
                {figures/convergence_surface_mcf_space_sphere_meshdist_CG1.dat};
                \addlegendentry{CG(1)}
            \addplot[seabornred!80!black, mark=square*]
                table[x index=0, y index=1]
                {figures/convergence_surface_mcf_space_sphere_meshdist_CG2.dat};
                \addlegendentry{CG(2)}
            \addplot[seaborngreen!80!black, mark=triangle*]
                table[x index=0, y index=1]
                {figures/convergence_surface_mcf_space_sphere_meshdist_CG3.dat};
                \addlegendentry{CG(3)}
            \addplot[domain=0.04:0.6, dotted, gray] {0.08*x^2};
                \addlegendentry{slope 2}
            \addplot[domain=0.04:0.6, dashed, gray] {0.003*x^4};
                \addlegendentry{slope 4}
        \nextgroupplot[
            xlabel={$\tau$},
            ylabel={$\calE_M$},
            legend pos=south east,
        ]
            \addplot[seabornblue!80!black, mark=*]
                table[x index=0, y index=1]
                {figures/convergence_surface_mcf_time_sphere_meshdist_cPG1.dat};
                \addlegendentry{CPG(1)}
            \addplot[seabornred!80!black, mark=square*]
                table[x index=0, y index=1]
                {figures/convergence_surface_mcf_time_sphere_meshdist_cPG2.dat};
                \addlegendentry{CPG(2)}
            \addplot[seaborngreen!80!black, mark=triangle*]
                table[x index=0, y index=1]
                {figures/convergence_surface_mcf_time_sphere_meshdist_cPG3.dat};
                \addlegendentry{CPG(3)}
            \addplot[domain=0.004:0.07, dotted, gray]     {0.1*x^2};
                \addlegendentry{slope 2}
            \addplot[domain=0.013:0.2,  dashed, gray]     {0.3*x^4};
                \addlegendentry{slope 4}
            \addplot[domain=0.025:0.2,  dashdotted, gray] {6*x^6};
                \addlegendentry{slope 6}
        \end{groupplot}
    \end{tikzpicture}
    \caption{Convergence test for MCF scheme \eqref{eq:mcf_cpg} on the
    unit sphere $\mathbb{S}^{2}$, with the discrete solution at
    $T=0.05$ compared to the exact
    solution~\eqref{eq:mcf-sphere-exact} via the mean error
    $\calE_M$~\eqref{eq:manifold-distance}.
    \emph{Left:} spatial convergence.
    \emph{Right:} temporal convergence.}
    \label{fig:mcf-eoc-sphere}
\end{figure}

\begin{figure}[!htbp]
    \centering
    \begin{tikzpicture}
        \pgfplotsset{
            every axis/.append style={
                width=0.48\linewidth, height=0.6\linewidth,
                xmode=log, ymode=log,
                xlabel style={font=\scriptsize, yshift=2pt},
                ylabel style={font=\scriptsize, yshift=-2pt},
                tick label style={font=\scriptsize},
                grid=both,
                grid style={dotted, gray!40},
                every axis plot/.append style={very thick},
                legend style={font=\tiny, draw=none, fill opacity=0.7,
                              inner sep=2pt, row sep=-2pt},
                legend cell align=left,
            },
        }
        \begin{groupplot}[
            group style={group size=2 by 1, horizontal sep=2cm},
        ]
        \nextgroupplot[
            xlabel={$h$},
            ylabel={$\calE_M$},
            legend pos=south east,
        ]
            \addplot[seabornblue!80!black, mark=*]
                table[x index=0, y index=1]
                {figures/convergence_surface_mcf_space_ellipsoid_meshdist_CG1.dat};
                \addlegendentry{CG(1)}
            \addplot[seabornred!80!black, mark=square*]
                table[x index=0, y index=1]
                {figures/convergence_surface_mcf_space_ellipsoid_meshdist_CG2.dat};
                \addlegendentry{CG(2)}
            \addplot[seaborngreen!80!black, mark=triangle*]
                table[x index=0, y index=1]
                {figures/convergence_surface_mcf_space_ellipsoid_meshdist_CG3.dat};
                \addlegendentry{CG(3)}
            \addplot[domain=0.04:0.6, dotted,     gray] {0.08*x^2};
                \addlegendentry{slope 2}
            \addplot[domain=0.04:0.6, dashed,     gray] {0.005*x^3};
                \addlegendentry{slope 3}
            \addplot[domain=0.04:0.6, dashdotted, gray] {0.003*x^4};
                \addlegendentry{slope 4}
        \nextgroupplot[
            xlabel={$\tau$},
            ylabel={$\calE_M$},
            legend pos=south east,
        ]
            \addplot[seabornblue!80!black, mark=*]
                table[x index=0, y index=1]
                {figures/convergence_surface_mcf_time_ellipsoid_meshdist_cPG1.dat};
                \addlegendentry{CPG(1)}
            \addplot[seabornred!80!black, mark=square*]
                table[x index=0, y index=1]
                {figures/convergence_surface_mcf_time_ellipsoid_meshdist_cPG2.dat};
                \addlegendentry{CPG(2)}
            \addplot[seaborngreen!80!black, mark=triangle*]
                table[x index=0, y index=1]
                {figures/convergence_surface_mcf_time_ellipsoid_meshdist_cPG3.dat};
                \addlegendentry{CPG(3)}
            \addplot[domain=0.0015:0.03, dotted,     gray] {0.15*x^2};
                \addlegendentry{slope 2}
            \addplot[domain=0.003:0.03,  dashed,     gray] {7*x^4};
                \addlegendentry{slope 4}
            \addplot[domain=0.012:0.05,  dashdotted, gray] {200*x^6};
                \addlegendentry{slope 6}
        \end{groupplot}
    \end{tikzpicture}
    \caption{Convergence test for MCF scheme \eqref{eq:mcf_cpg} on the
    asymmetric perturbed ellipsoid \eqref{eq:mcf-ellipsoid-IC}, with
    adjacent-refinement comparison via the mean
    error $\calE_M$~\eqref{eq:manifold-distance}.
    \emph{Left:} spatial convergence.
    \emph{Right:} temporal convergence.}
    \label{fig:mcf-eoc-ellipsoid}
\end{figure}

\subsubsection{Dumbbell benchmark \& mesh quality}

We next test our proposed MCF scheme \eqref{eq:mcf_cpg} on a classical benchmark:
the evolution of an axisymmetric ``dumbbell'' surface, parameterized in the same form as in \cite{Gao_Li_Tang_2026,jiang2024stable}:
\begin{equation}
    \bfX(\theta,\varphi) =
    \!\begin{pmatrix}
        \cos\varphi \\
        (0.6\cos^{2}\varphi + 0.4)\,\cos\theta \sin\varphi \\
        (0.6\cos^{2}\varphi + 0.4)\,\sin\theta \sin\varphi
    \end{pmatrix}\!,
    \quad \theta\in[0,2\pi),\quad \varphi\in[0,\pi].
    \label{eq:mcf-dumbbell-IC}
\end{equation}
Under MCF this surface is known first to reduce its surface area, then shrink to a point (the ``blowup'' time at which $S$ approaches zero).
Discretizations that do not include a suitable tangential motion typically suffer severe mesh distortion in this regime~\cite{Gao_Li_Tang_2026}.

We discretize with continuous Galerkin (CG) elements of order $k$ in space and $k$-stage CPG in time, for $k \in \{1,2,3\}$.
The reference triangulation on $\widetilde{\calM}$ is shared across the three runs---a uniform refinement of the unit icosahedral sphere, with $642$ vertices and $1280$ triangles---with the time step fixed at $10^{-5}$.
Each run is advanced until the Newton iteration in the CPG time discretization step fails, coinciding with the blowup.
The blowup times produced by the three schemes are $t_{\text{blowup}} = 9.200 \times 10^{-2}$, $9.094 \times 10^{-2}$ and $9.094 \times 10^{-2}$ respectively.

\Cref{fig:mcf-dumbbell-evolution} shows snapshots of the discrete surface at the initial time $t = 0$, two intermediate times $t = 0.04$ and $t = 0.08$, and at the iteration before the blowup time $t = t_{\text{blowup}}$.
The dumbbell first relaxes its ``waist'' as MCF straightens regions of high mean curvature, then collapses isotropically toward a point.
By $t = 0.08$ the diameter has shrunk, and at $t = t_{\text{blowup}}$ the discrete surface is essentially a spherical point, the expected end state of MCF before the singularity.

\Cref{fig:mcf-dumbbell-comparison} reports two quantitative diagnostics:
the normalized surface area $S(t)/S(0)$, and a mesh quality indicator $r_h(t)$.
This latter quantity tracks the worst-case area distortion of an element relative to its initial state, defined at a time $t$ element-wise as
\begin{equation}\label{eq:rh-def}
    r_h(t) := \frac{\max_{K \in \calK^h} \sup_{\bfx \in K} \calJ_t(\bfx)}{\min_{K \in \calK^h} \inf_{\bfx \in K} \calJ_t(\bfx)},
    \qquad
    \calJ_t(\bfx) := \sqrt{\frac{J_t(\bfx)}{J_0(\bfx)}},
\end{equation}
where $J_t(\bfx)$ is the Jacobian of the map $\bfX_t : \widetilde{\calM} \to \calM_t$, defined within each cell $K \subset \widetilde{\calM}$ \eqref{eq:jacobian};
correspondingly $J_0(\bfx)$ is the Jacobian of the initial mesh\footnote{
    In practice, the suprema $\sup_{\bfx \in K}$ and infima $\inf_{\bfx \in K}$ cannot be evaluated exactly.
    They are evaluated in our code as the suprema and infima over quadrature points.
}.
All three schemes drive the surface area down toward zero before the blowup time, and the mesh-quality ratio remains modest ($r_h \le 2.5$) throughout, confirming that the MDR tangential motion keeps the triangulation well-shaped until the singular collapse.

\begin{figure}[!htbp]
    \centering
    \setlength{\tabcolsep}{0pt}
    \renewcommand{\arraystretch}{0}
    \newcommand{\dbsnapgap}{\\[12pt]}
    \newcommand{\dbbottomgap}{\\[18pt]}
    \newcommand{\dblabel}[1]{\raisebox{0.42\height}{\rotatebox{90}{\scriptsize #1}}}
    \begin{tabular}{@{}c@{\hspace{15pt}}c@{\hspace{15pt}}c@{\hspace{15pt}}c@{}}
        \dblabel{\;\;$t=0$} & &
        \includegraphics[width=0.24\linewidth]{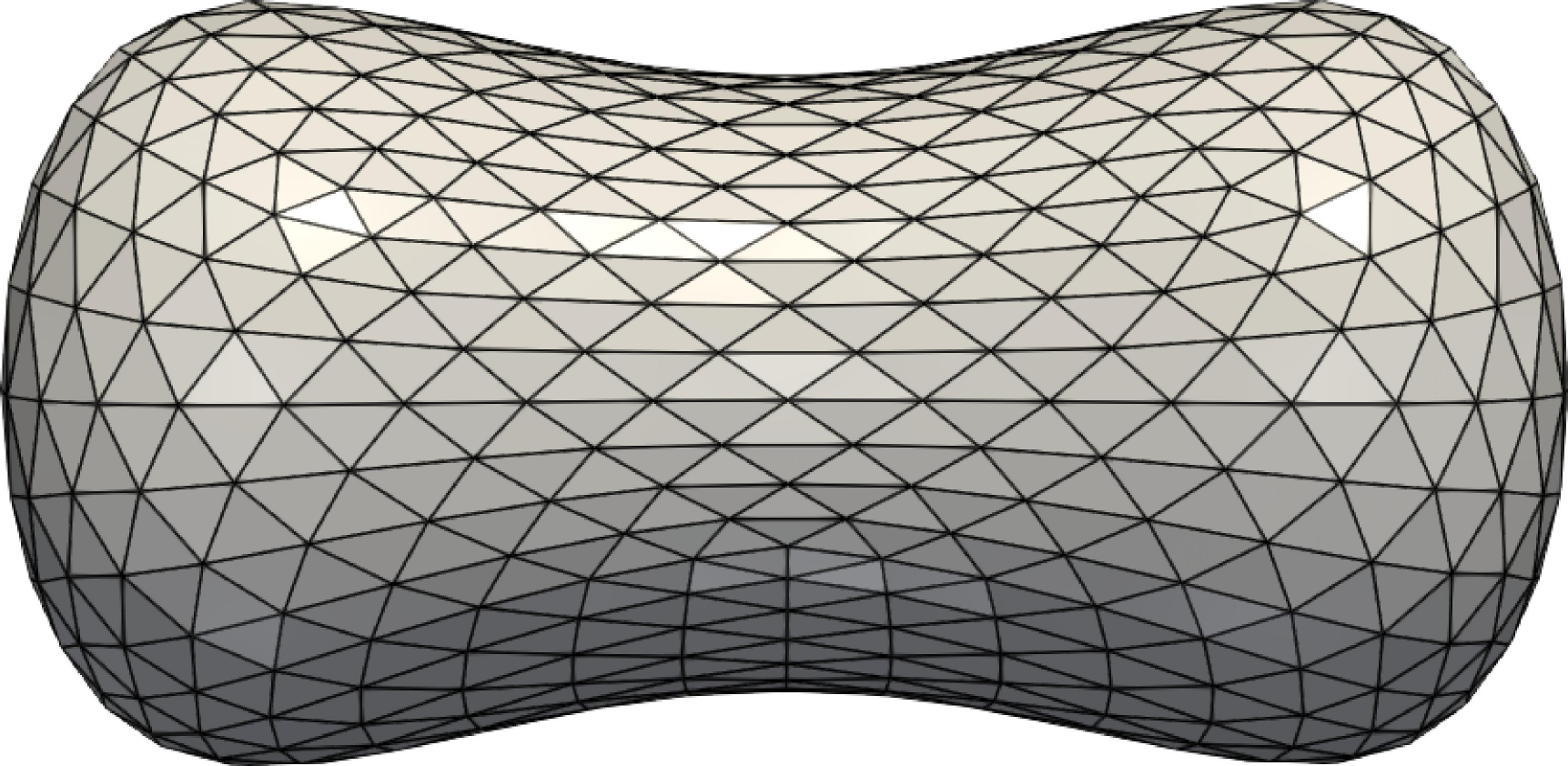} & \dbsnapgap
        & \scriptsize CG(1)--CPG(1) & \scriptsize CG(2)--CPG(2) & \scriptsize CG(3)--CPG(3) \\[2pt]
        \dblabel{\;\;$t=0.04$} &
        \includegraphics[width=0.24\linewidth]{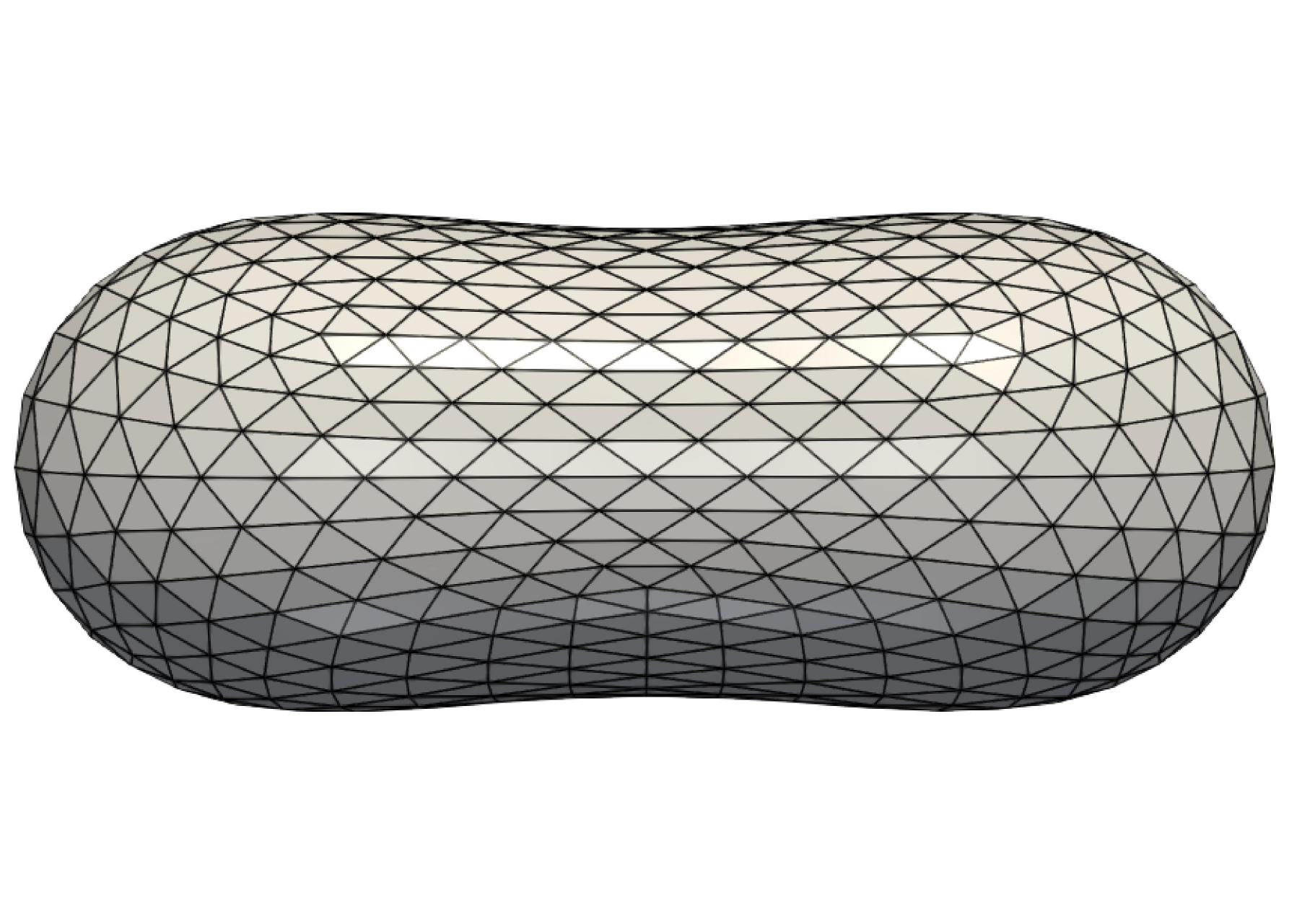} &
        \includegraphics[width=0.24\linewidth]{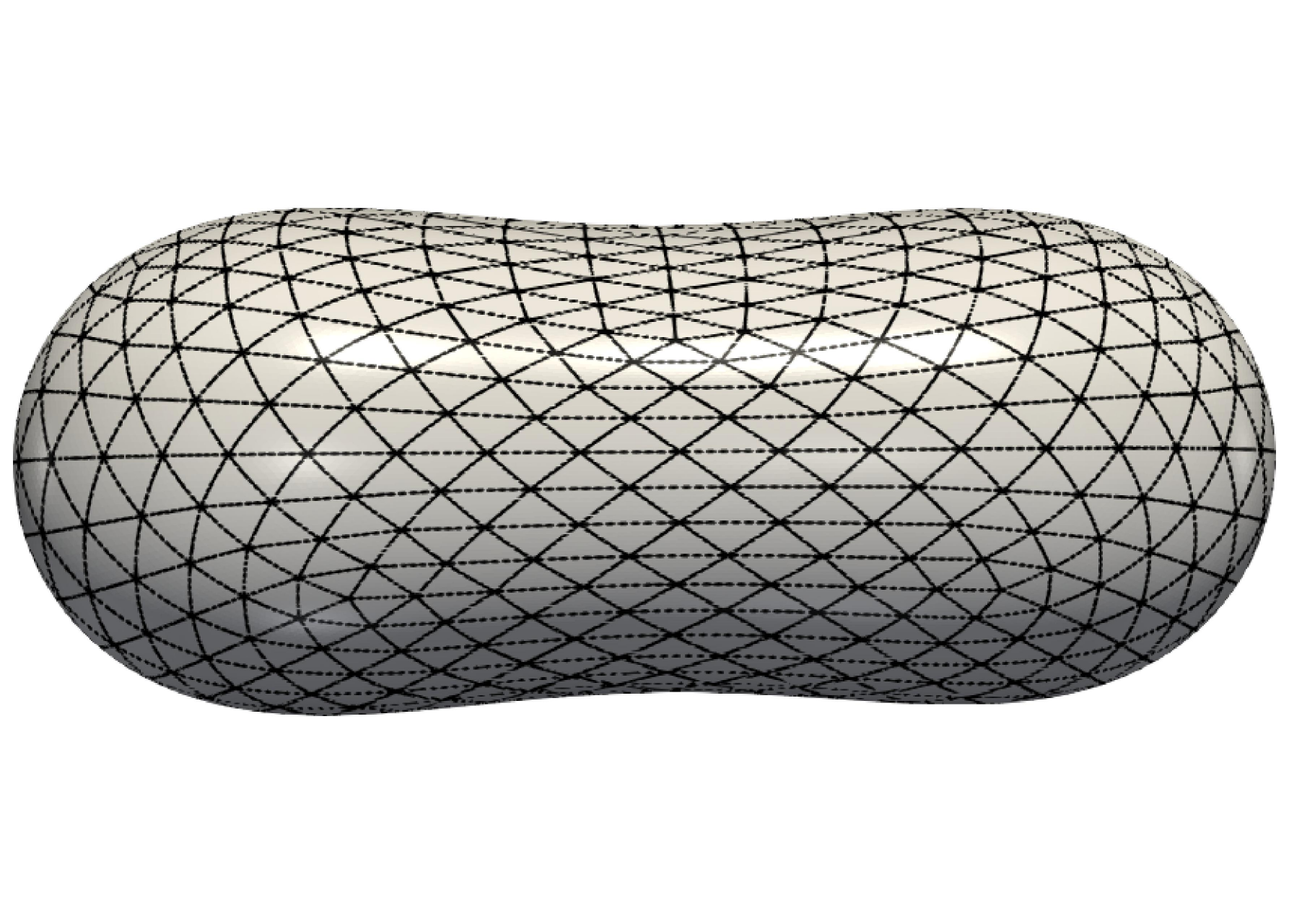} &
        \includegraphics[width=0.24\linewidth]{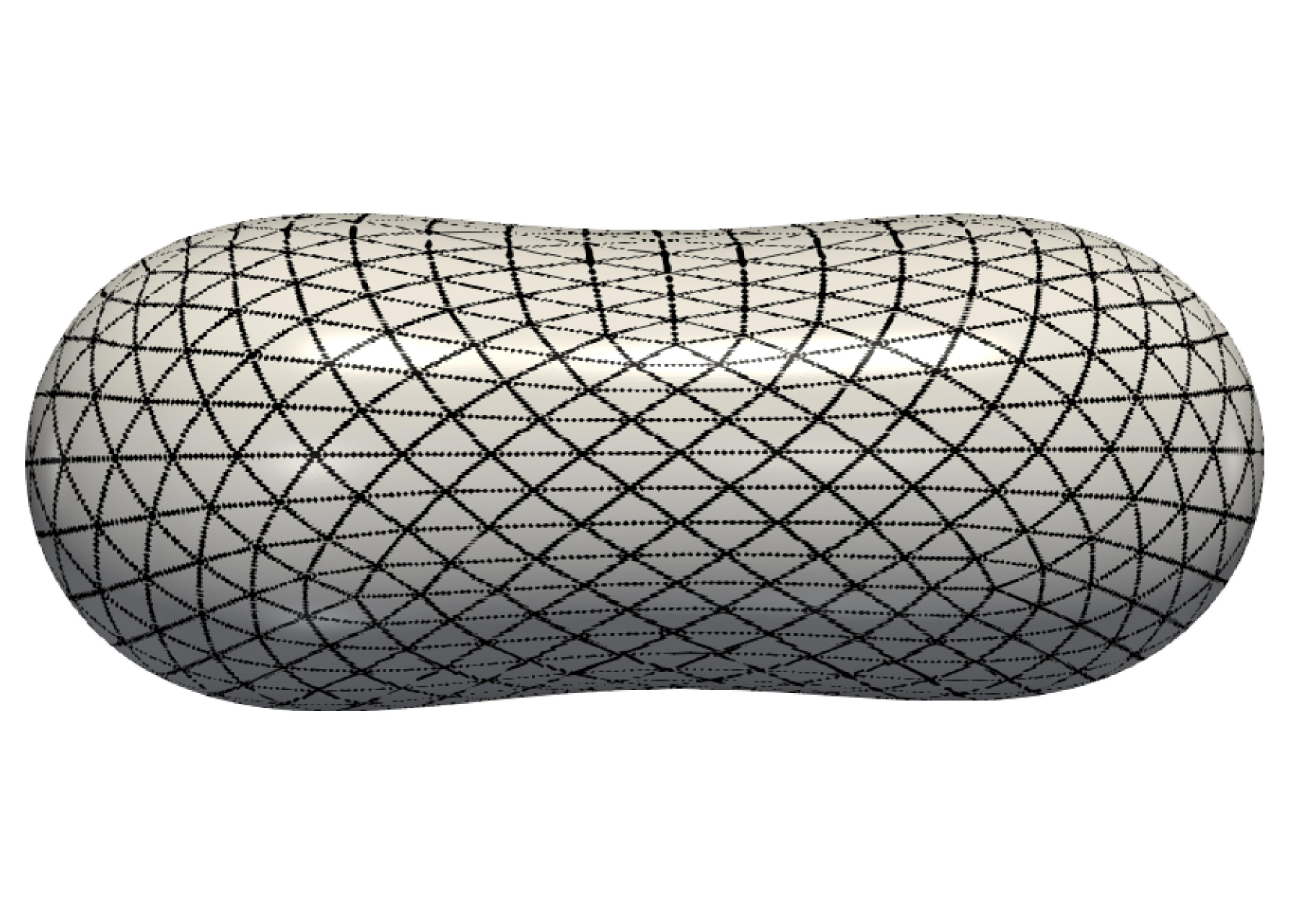} \dbsnapgap
        \dblabel{\;\;$t=0.08$} &
        \includegraphics[width=0.24\linewidth]{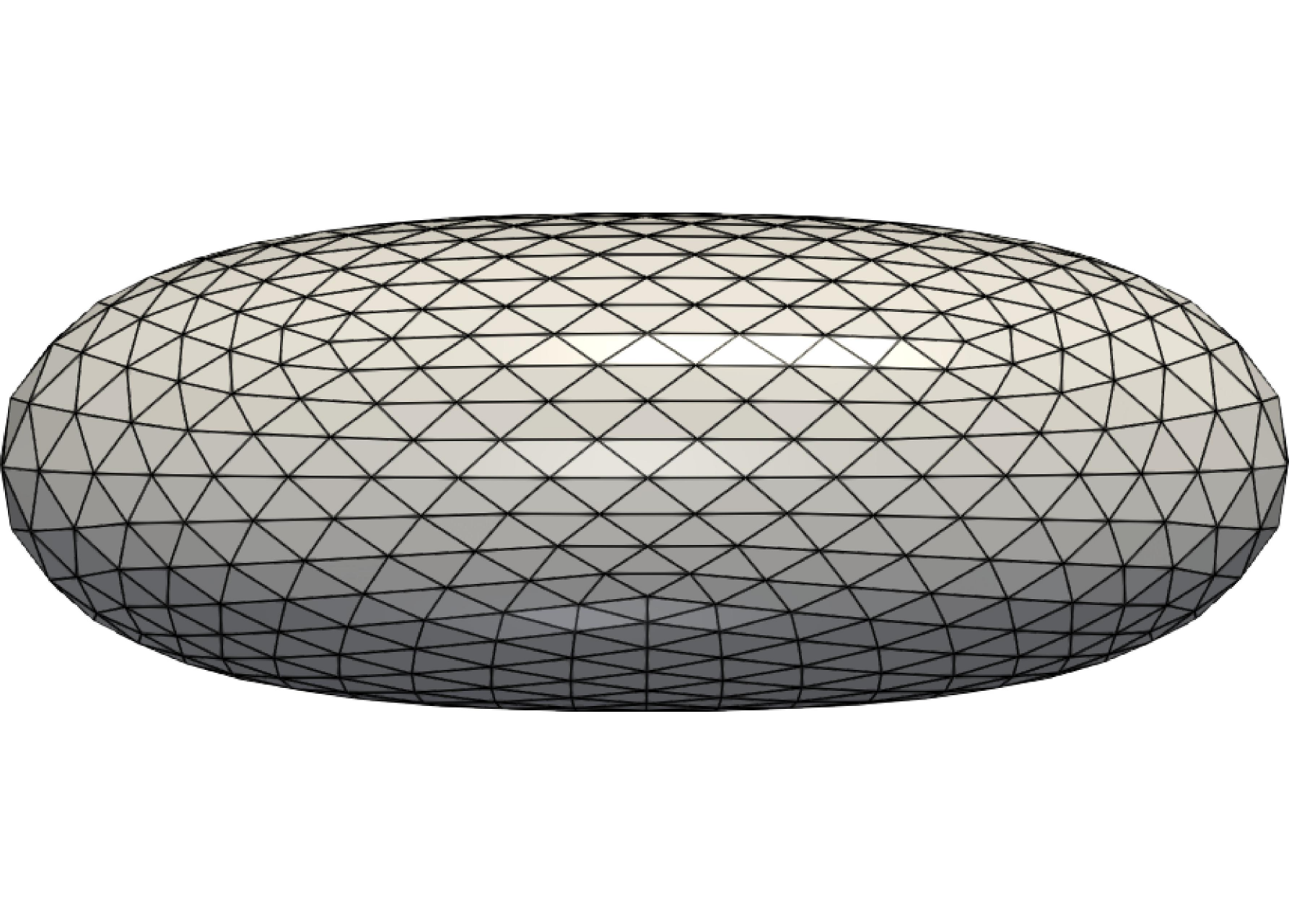} &
        \includegraphics[width=0.24\linewidth]{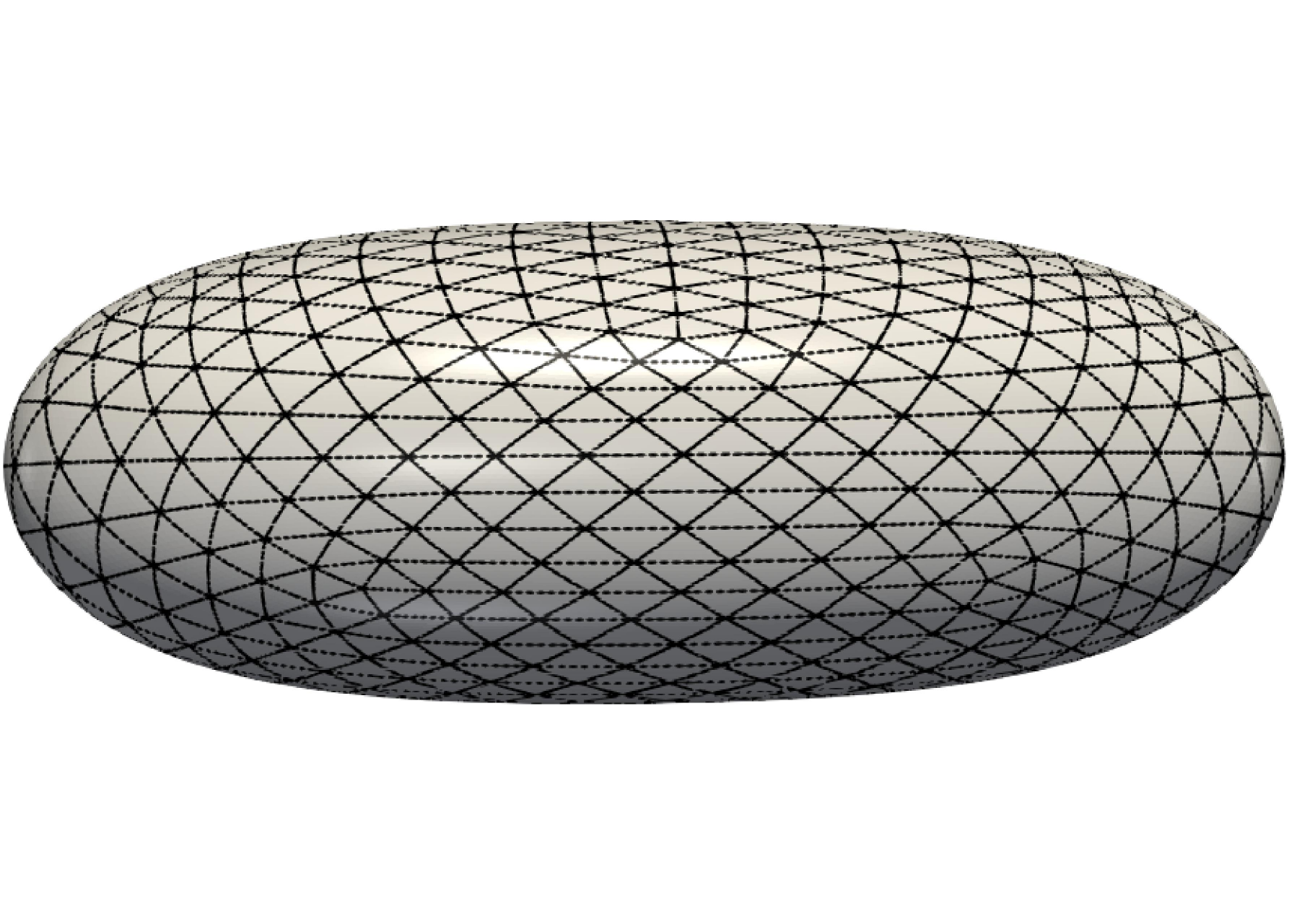} &
        \includegraphics[width=0.24\linewidth]{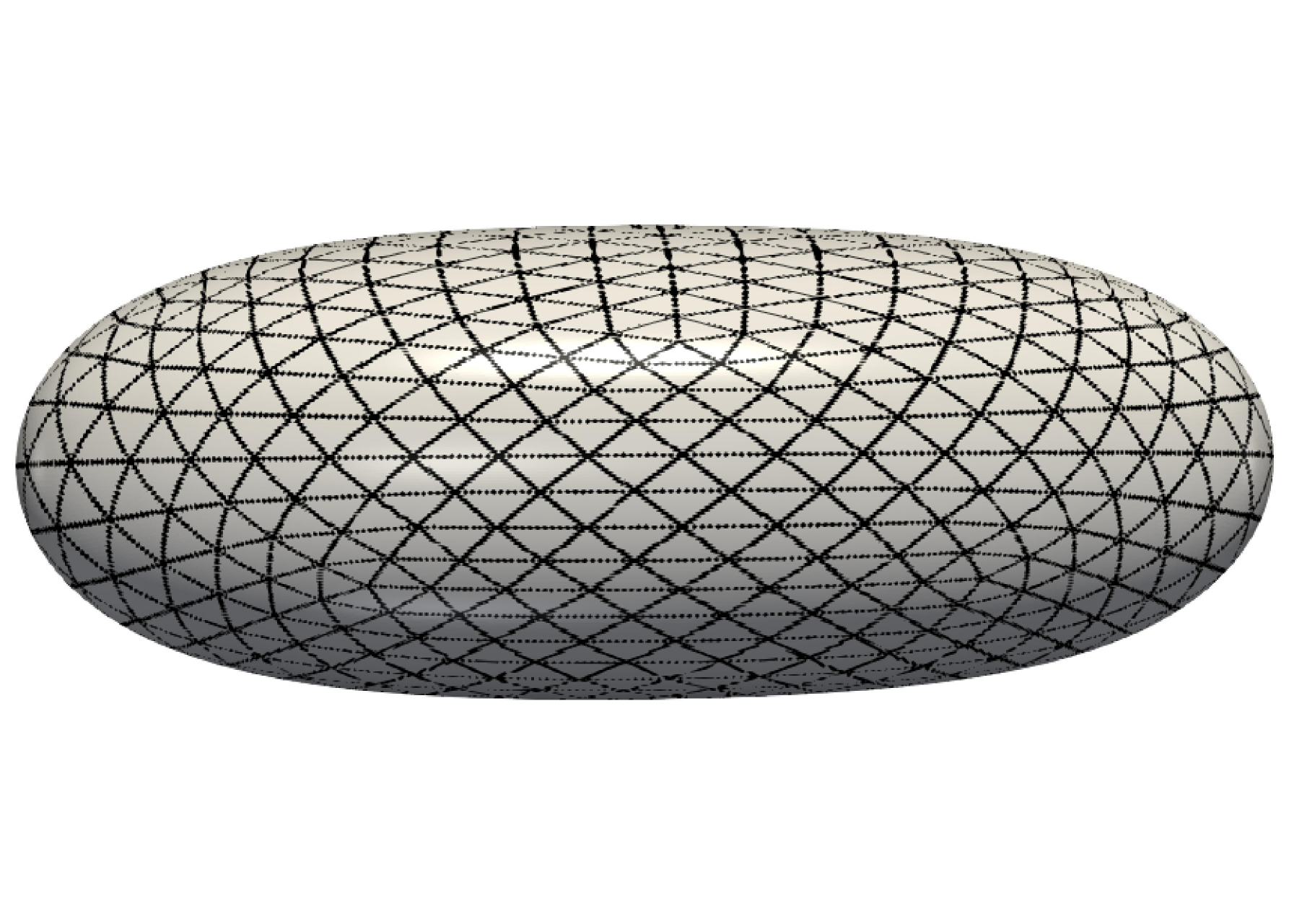} \dbsnapgap
        \dblabel{$t\!=\!t_{\text{blowup}}$} &
        \includegraphics[width=0.24\linewidth]{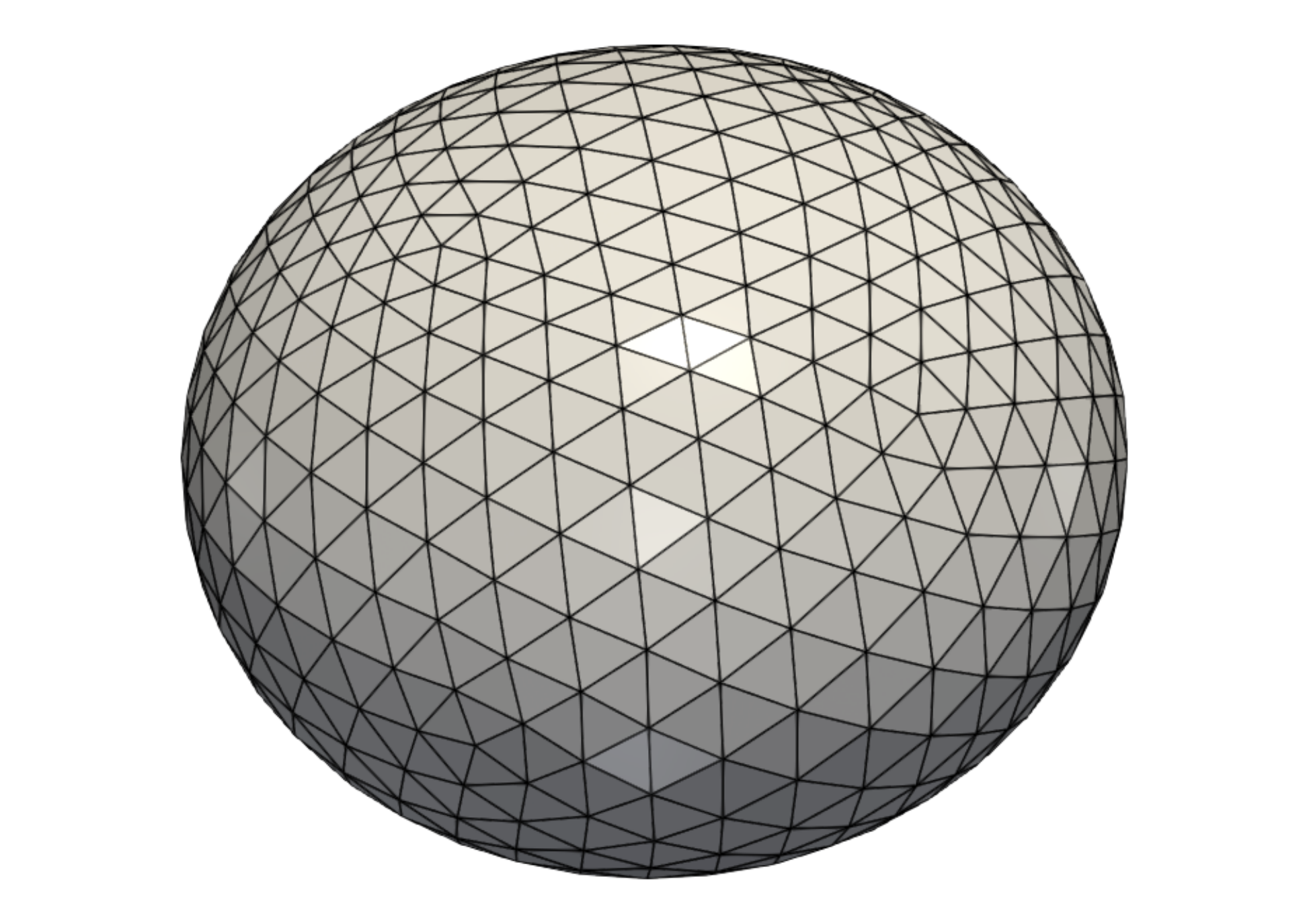} &
        \includegraphics[width=0.24\linewidth]{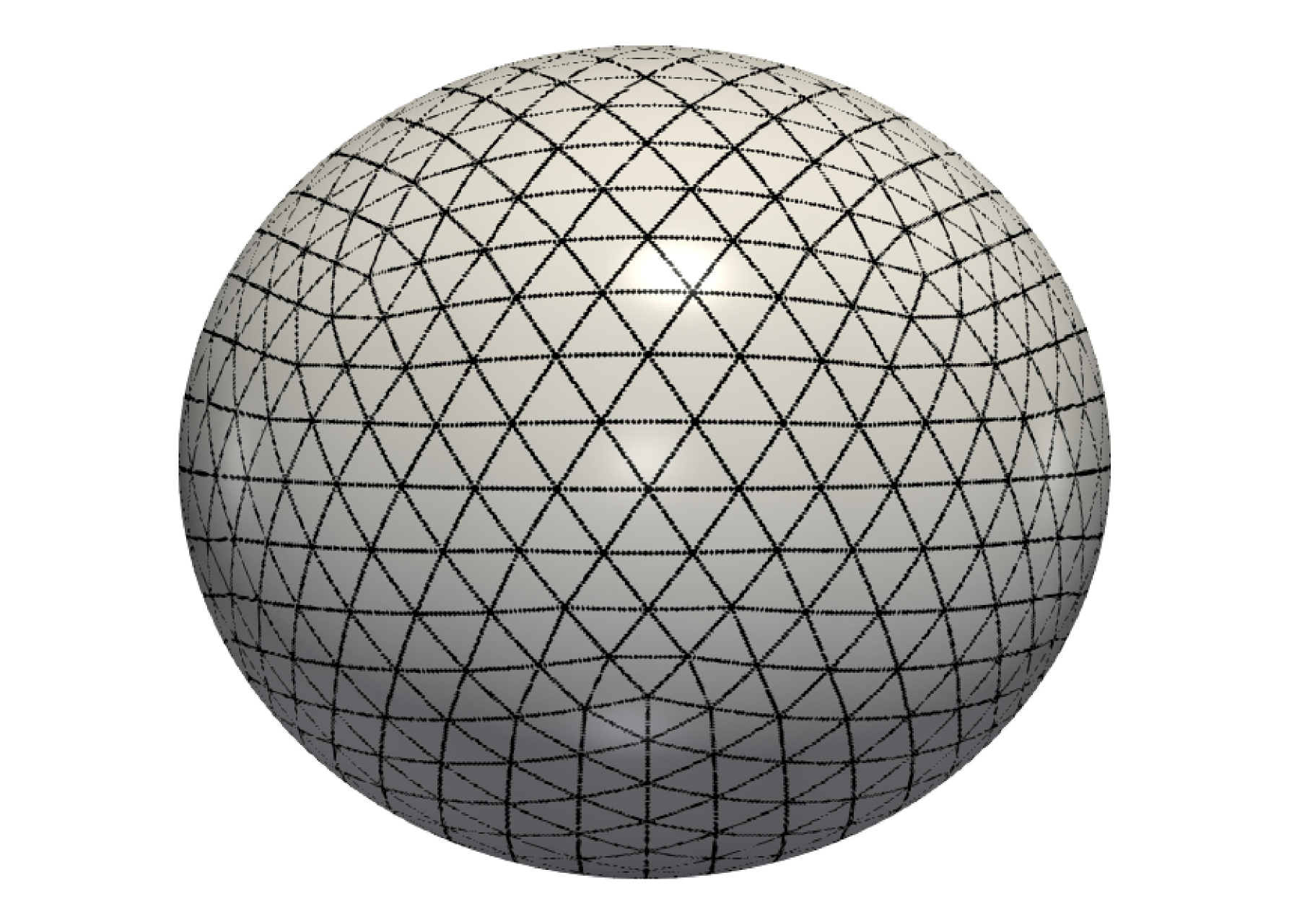} &
        \includegraphics[width=0.24\linewidth]{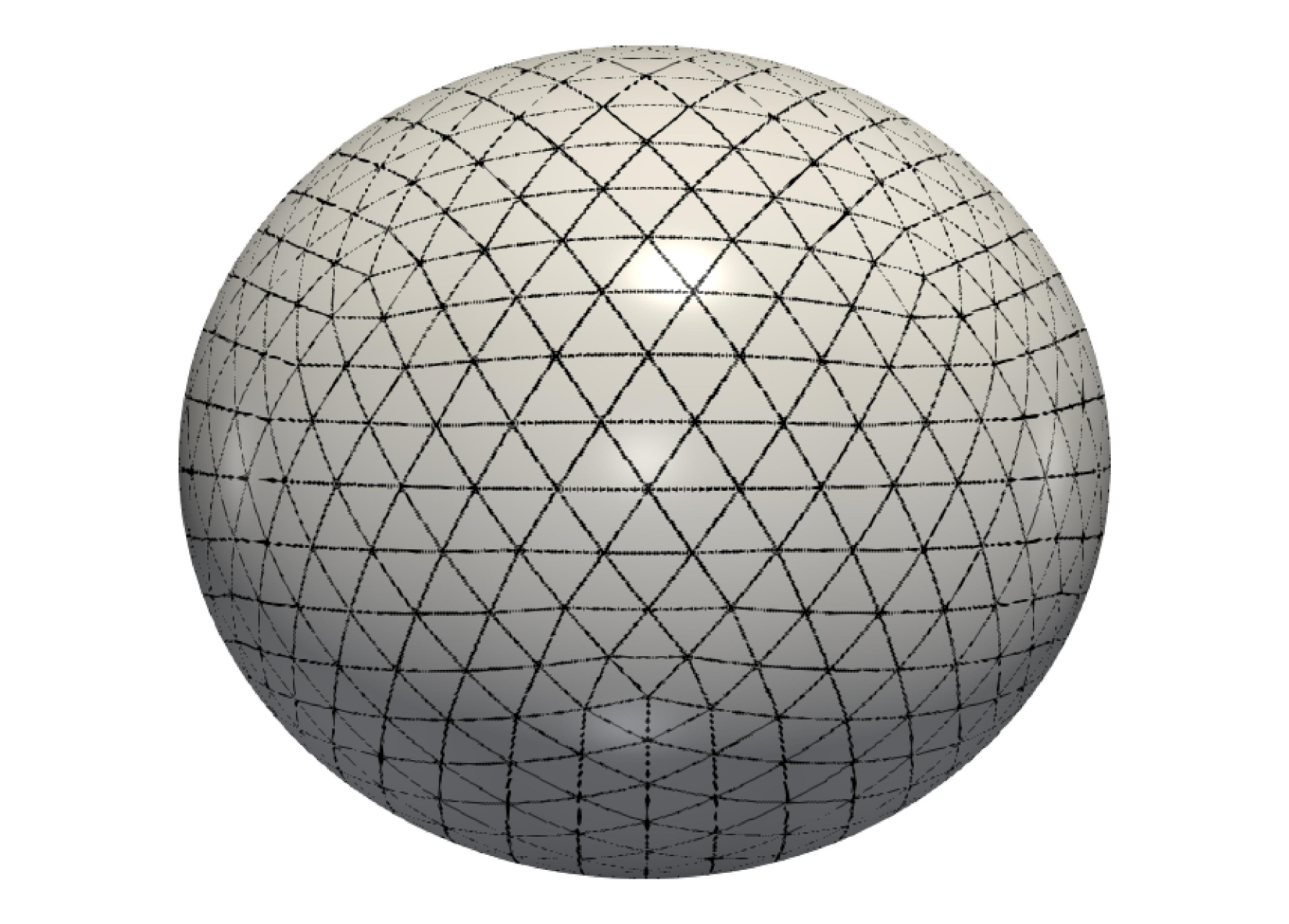} \dbbottomgap
        & \scriptsize $t_{\text{blowup}}\!=\!0.09200$ & \scriptsize $t_{\text{blowup}}\!=\!0.09094$ & \scriptsize $t_{\text{blowup}}\!=\!0.09094$ \\
    \end{tabular}
    \caption{MCF of the dumbbell \eqref{eq:mcf-dumbbell-IC} as computed by our proposed scheme \eqref{eq:mcf_cpg}. The last row of images are not to scale with the others.}
    \label{fig:mcf-dumbbell-evolution}
\end{figure}

\begin{figure}[!htbp]
    \centering
    \begin{tikzpicture}
        \pgfplotsset{
            every axis/.append style={
                width=0.48\linewidth, height=0.5\linewidth,
                xlabel={$t$},
                xlabel style={font=\scriptsize, yshift=2pt},
                ylabel style={font=\scriptsize, yshift=-2pt},
                tick label style={font=\scriptsize},
                grid=both,
                grid style={dotted, gray!40},
                every axis plot/.append style={very thick},
                cycle list={
                    {seabornblue!80!black, thick, solid},
                    {seabornred!80!black, thick, dashed},
                    {seaborngreen!80!black, thick, dashdotted}
                },
            },
        }
        \begin{groupplot}[
            group style={group size=2 by 1, horizontal sep=1.6cm},
        ]
        \nextgroupplot[
            ylabel={$S(t)/S(0)$},
            ymin=0, ymax=1.05,
        ]
            \addplot table[x=t, y=S_norm]{figures/dumbbell_cg1_hist.dat};
            \addplot table[x=t, y=S_norm]{figures/dumbbell_cg2_hist.dat};
            \addplot table[x=t, y=S_norm]{figures/dumbbell_cg3_hist.dat};
        \nextgroupplot[
            ylabel={$r_{h}(t)$},
            legend pos=north west,
            legend cell align=left,
          legend style={font=\tiny, draw=none, fill opacity=0.6,
                          inner sep=2pt, row sep=-2pt},
        ]
            \addplot table[x=t, y=rh]{figures/dumbbell_cg1_hist.dat};
                \addlegendentry{CG(1)--CPG(1)}
            \addplot table[x=t, y=rh]{figures/dumbbell_cg2_hist.dat};
                \addlegendentry{CG(2)--CPG(2)}
            \addplot table[x=t, y=rh]{figures/dumbbell_cg3_hist.dat};
                \addlegendentry{CG(3)--CPG(3)}
        \end{groupplot}
    \end{tikzpicture}
    \caption{Quantitative diagnostics for MCF computed by
    the scheme \eqref{eq:mcf_cpg}. \emph{Left:} normalized surface
    area $S(t)/S(0)$. \emph{Right:} mesh-quality indicator $r_h(t)$.}
    \label{fig:mcf-dumbbell-comparison}
\end{figure}

\subsection{Surface diffusion (SD)}\label{sec:sd-cuboid}

We now consider the proposed SD discretization \eqref{eq:sd_cpg}.

\subsubsection{Convergence tests}

To the best of our knowledge, no exact solution of SD is available on a general surface.
Taking the initial geometry $\calM_0$ to be the same perturbed ellipsoid initial geometry as considered in the MCF case above \eqref{eq:mcf-ellipsoid-IC}, we test both the spatial and temporal errors with the same strategy:
computation by comparison on adjacent refinement levels, again under the mean error $\calE_M$ \eqref{eq:manifold-distance}.

\Cref{fig:sd-eoc} shows the resulting convergence plots.
Spatially, CG($k$) attains the expected slope~$k+1$ for $k\in\{1,2,3\}$ ($2$, $3$ and $4$ respectively).
Temporally, CPG($s$) attains the expected slope~$2s$ for $s\in\{1,2\}$ ($2$ and $4$ respectively);
at $s=3$, we achieve only a slope~$5$ before solver tolerances and roundoff errors dominate.

\begin{figure}[!htbp]
    \centering
    \begin{tikzpicture}
        \pgfplotsset{
            every axis/.append style={
                width=0.48\linewidth, height=0.6\linewidth,
                xmode=log, ymode=log,
                xlabel style={font=\scriptsize, yshift=2pt},
                ylabel style={font=\scriptsize, yshift=-2pt},
                tick label style={font=\scriptsize},
                grid=both,
                grid style={dotted, gray!40},
                every axis plot/.append style={very thick},
                legend style={font=\tiny, draw=none, fill opacity=0.7,
                              inner sep=2pt, row sep=-2pt},
                legend cell align=left,
            },
        }
        \begin{groupplot}[
            group style={group size=2 by 1, horizontal sep=2cm},
        ]
        \nextgroupplot[
            xlabel={$h$},
            ylabel={$\calE_M$},
            legend pos=south east,
        ]
            \addplot[seabornblue!80!black, mark=*]
                table[x index=0, y index=1]
                {figures/convergence_surface_sd_space_meshdist_CG1.dat};
                \addlegendentry{CG(1)}
            \addplot[seabornred!80!black, mark=square*]
                table[x index=0, y index=1]
                {figures/convergence_surface_sd_space_meshdist_CG2.dat};
                \addlegendentry{CG(2)}
            \addplot[seaborngreen!80!black, mark=triangle*]
                table[x index=0, y index=1]
                {figures/convergence_surface_sd_space_meshdist_CG3.dat};
                \addlegendentry{CG(3)}
            \addplot[domain=0.04:0.6, dotted,     gray] {0.07*x^2};
                \addlegendentry{slope 2}
            \addplot[domain=0.04:0.6, dashed,     gray] {0.006*x^3};
                \addlegendentry{slope 3}
            \addplot[domain=0.04:0.6, dashdotted, gray] {0.003*x^4};
                \addlegendentry{slope 4}
        \nextgroupplot[
            xlabel={$\tau$},
            ylabel={$\calE_M$},
            legend pos=south east,
        ]
            \addplot[seabornblue!80!black, mark=*]
                table[x index=0, y index=1]
                {figures/convergence_surface_sd_time_meshdist_cPG1.dat};
                \addlegendentry{CPG(1)}
            \addplot[seabornred!80!black, mark=square*]
                table[x index=0, y index=1]
                {figures/convergence_surface_sd_time_meshdist_cPG2.dat};
                \addlegendentry{CPG(2)}
            \addplot[seaborngreen!80!black, mark=triangle*]
                table[x index=0, y index=1]
                {figures/convergence_surface_sd_time_meshdist_cPG3_first3.dat};
                \addlegendentry{CPG(3)}
            \addplot[domain=0.0012:0.015, dotted,     gray] {3*x^2};
                \addlegendentry{slope 2}
            \addplot[domain=0.0008:0.007, dashed,     gray] {300*x^4};
                \addlegendentry{slope 4}
            \addplot[domain=0.001:0.008,  dashdotted, gray] {1e5*x^5};
                \addlegendentry{slope 5}
        \end{groupplot}
    \end{tikzpicture}
    \caption{Convergence test for SD scheme \eqref{eq:sd_cpg},
    measured by the mean error
    $\calE_M$~\eqref{eq:manifold-distance} via
    adjacent-refinement comparison.
    \emph{Left:} spatial convergence.
    \emph{Right:} temporal convergence.}
    \label{fig:sd-eoc}
\end{figure}

\subsubsection{Cuboid benchmark \& mesh quality}

We then test the SD scheme \eqref{eq:sd_cpg} on a benchmark known to develop a finite-time pinch-off singularity:
an $8\times 1\times 1$ cuboid.
The sharp edges and corners are first rounded, after which the bar develops a neck in the middle that pinches at time $t\approx 0.366$~\cite{Barrett_Garcke_Nurnberg_2008a,Gao_Li_Tang_2026,Bao_Zhao_2021,jiang2024stable}.

We similarly discretize with CG of order $k$ in space and $k$-stage CPG in time for $k \in \{1,2,3\}$.
The mesh is a uniform triangulation with 308 vertices and 918 edges, and the time step is again fixed at $10^{-5}$.
Again, each run is advanced until the Newton iteration in the corresponding nonlinear CPG step no longer converges, the natural endpoint at the singularity.
The pinch-off times $t_{\text{pinch}}$ produced by the three schemes are $0.35070$, $0.36724$, $0.36742$ respectively.

\Cref{fig:sd-cuboid-evolution} shows snapshots of the discrete surface at four representative times:
the initial cuboid at $t = 0$, the post-corner-rounding configuration at $t = 0.1$, the necking phase at $t = 0.3$, and the final pre-pinch configuration at $t = t_{\mathrm{pinch}}$ for each scheme (the largest discrete time at which we are still able to find a solution to the nonlinear CPG system).
The three columns correspond to the three schemes;
the four rows are aligned in time.
We observe that all three schemes agree on the general geometry up to the pinch-off time.
\Cref{fig:sd-cuboid-comparison} shows three quantitative diagnostics tracked along the evolution.
The normalized surface area $S(t)/S(0)$, the relative volume change $|\Delta V(t)|/V(0)$, and the mesh-quality indicator $r_h(t)$ defined as in~\eqref{eq:rh-def}.
The choice of these three diagnostics together gives a complete picture: the first plot certifies area dissipation, the second volume conservation, and the third plot shows that the underlying triangulation does not deteriorate during the evolution due to the MDR tangential motion.

\begin{figure}[!htbp]
    \centering
    \setlength{\tabcolsep}{0pt}
    \renewcommand{\arraystretch}{0}
    \newcommand{\snapgap}{\\[24pt]}
    \newcommand{\bottomgap}{\\[30pt]}
    \begin{tabular}{@{}c@{\hspace{15pt}}c@{\hspace{15pt}}c@{\hspace{15pt}}c@{}}
        \rotatebox{90}{\scriptsize $t=0$} & &
        \includegraphics[width=0.27\linewidth]{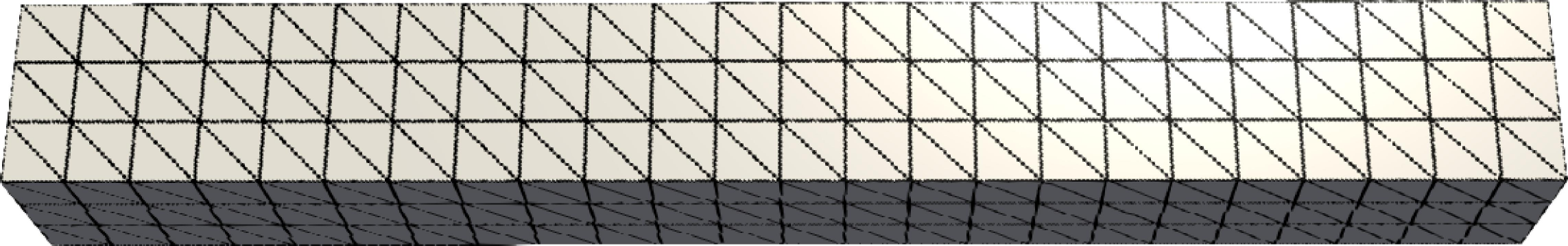} & \snapgap
        & \scriptsize CG(1)--CPG(1) & \scriptsize CG(2)--CPG(2) & \scriptsize CG(3)--CPG(3) \\[2pt]
        \rotatebox{90}{\scriptsize $t=0.1$} &
        \includegraphics[width=0.27\linewidth]{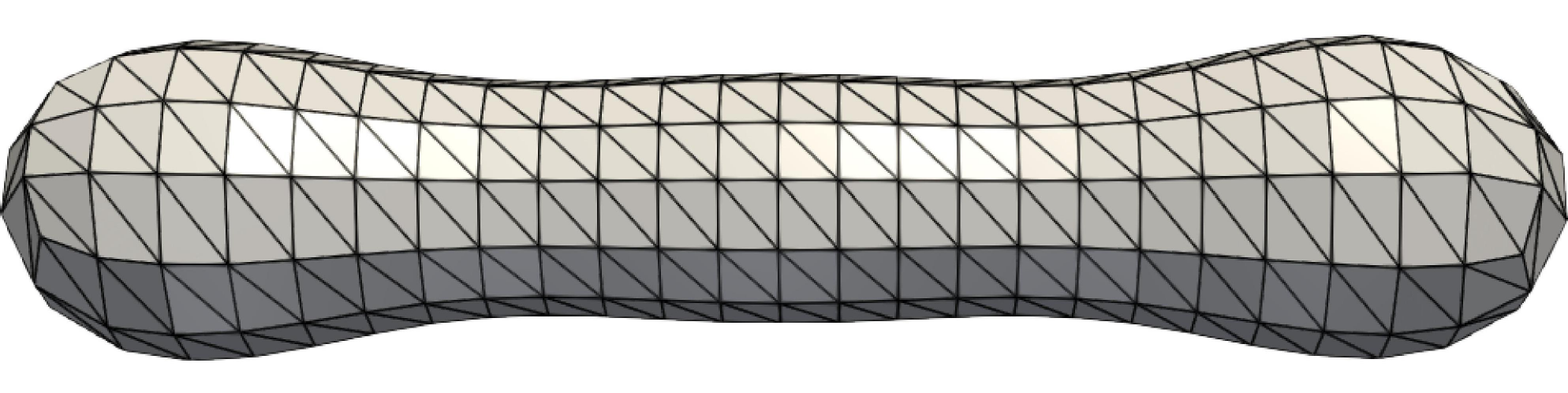} &
        \includegraphics[width=0.27\linewidth]{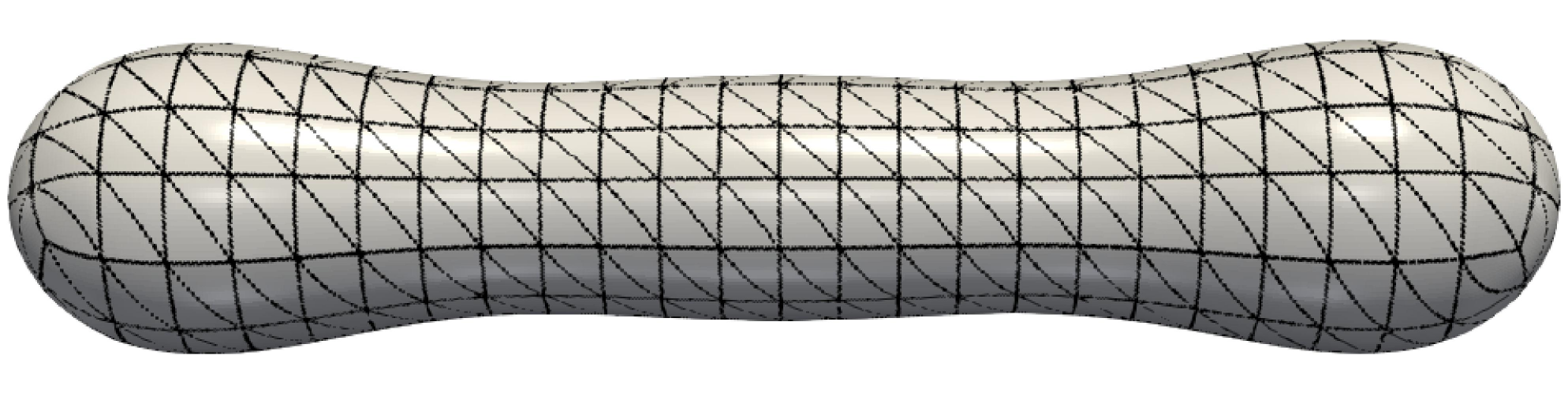} &
        \includegraphics[width=0.27\linewidth]{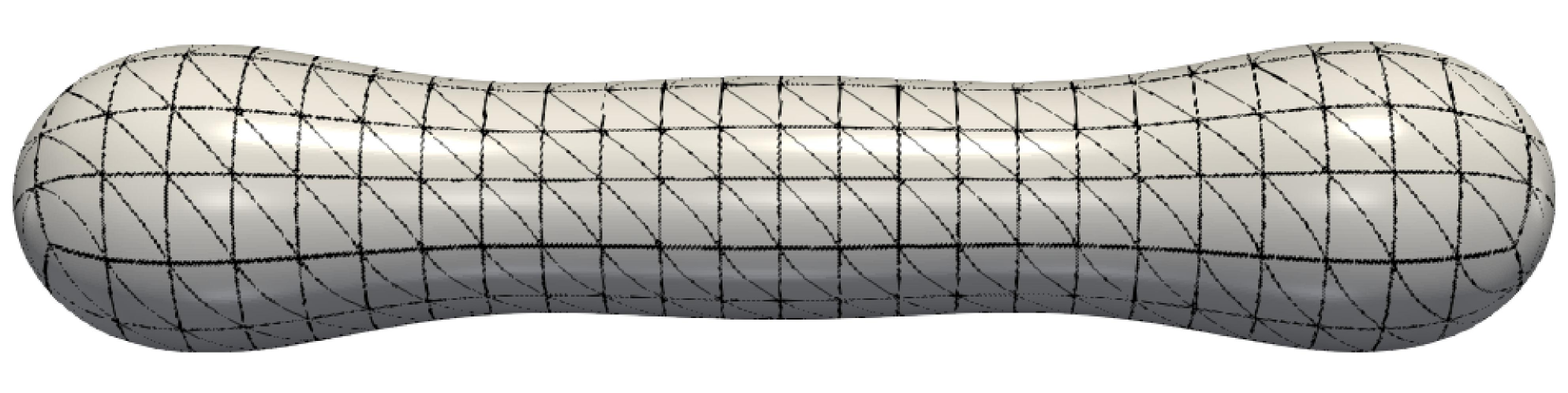} \snapgap
        \rotatebox{90}{\scriptsize $t=0.3$} &
        \includegraphics[width=0.27\linewidth]{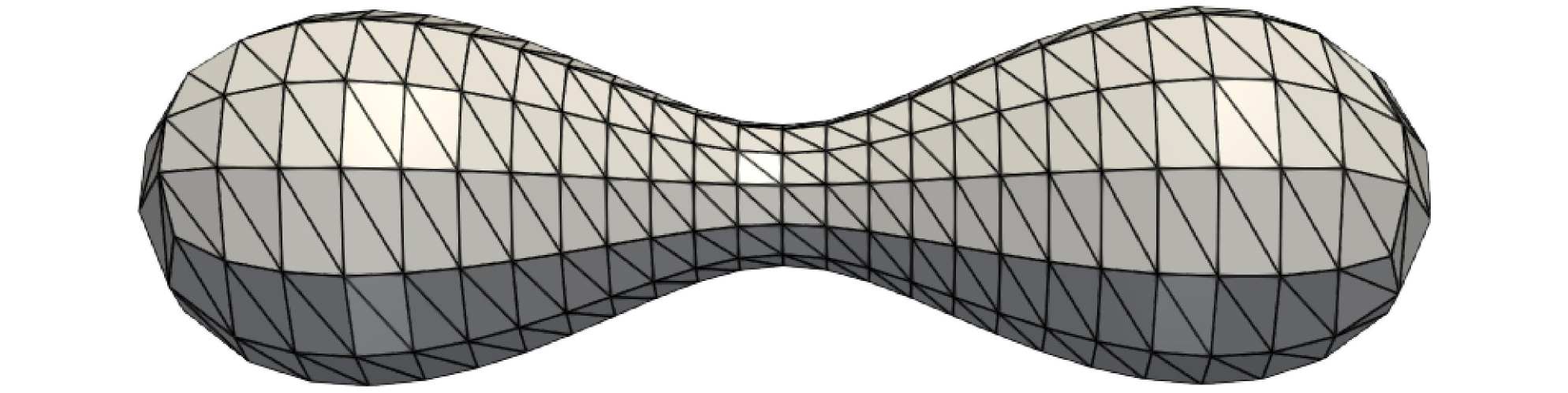} &
        \includegraphics[width=0.27\linewidth]{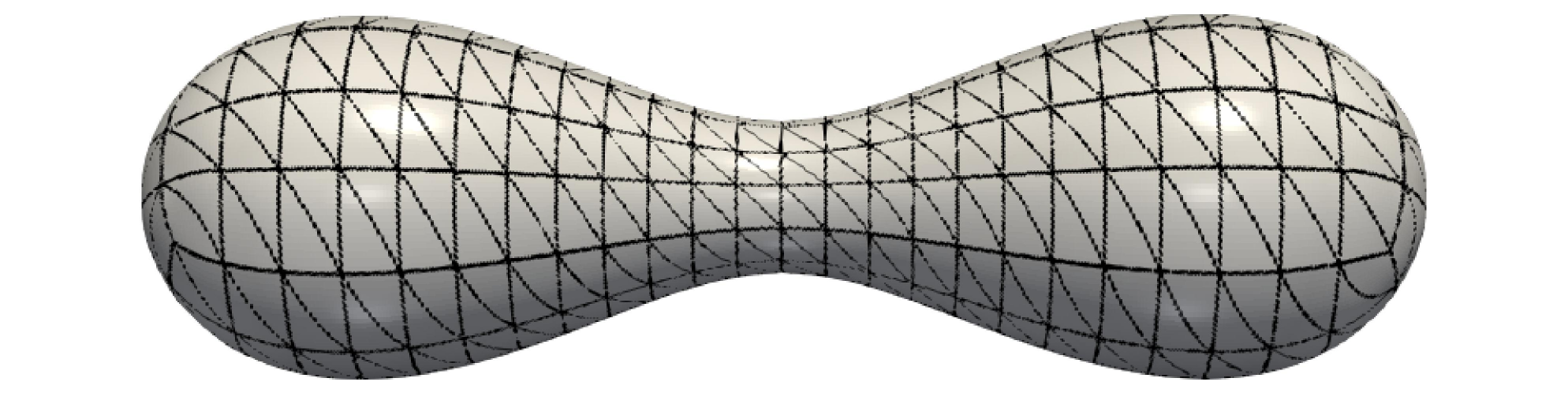} &
        \includegraphics[width=0.27\linewidth]{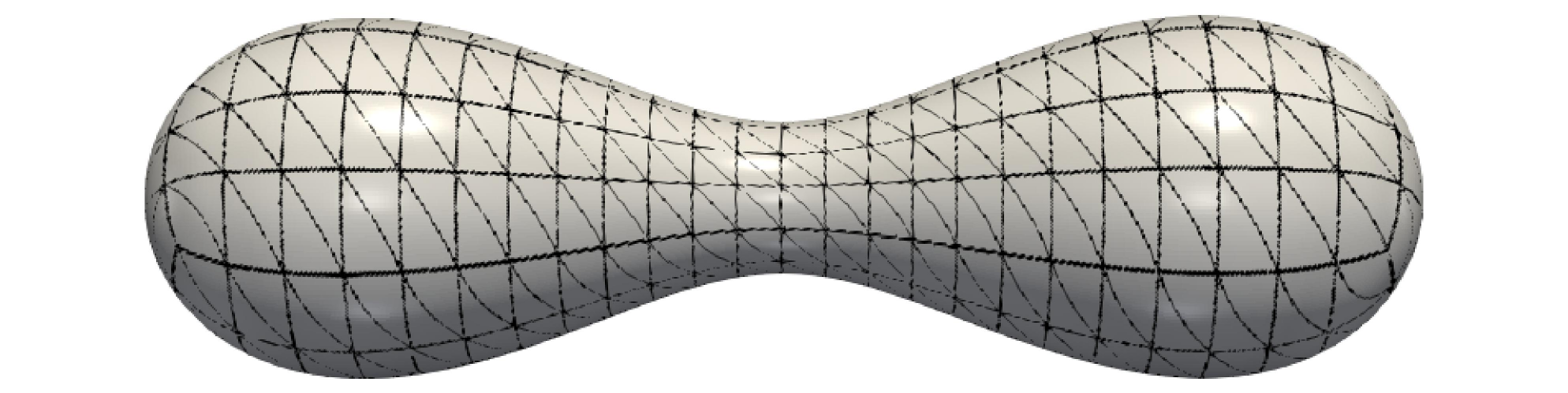} \snapgap
        \rotatebox{90}{\scriptsize \!$t\!=\!t_{\text{pinch}}$} &
        \includegraphics[width=0.27\linewidth]{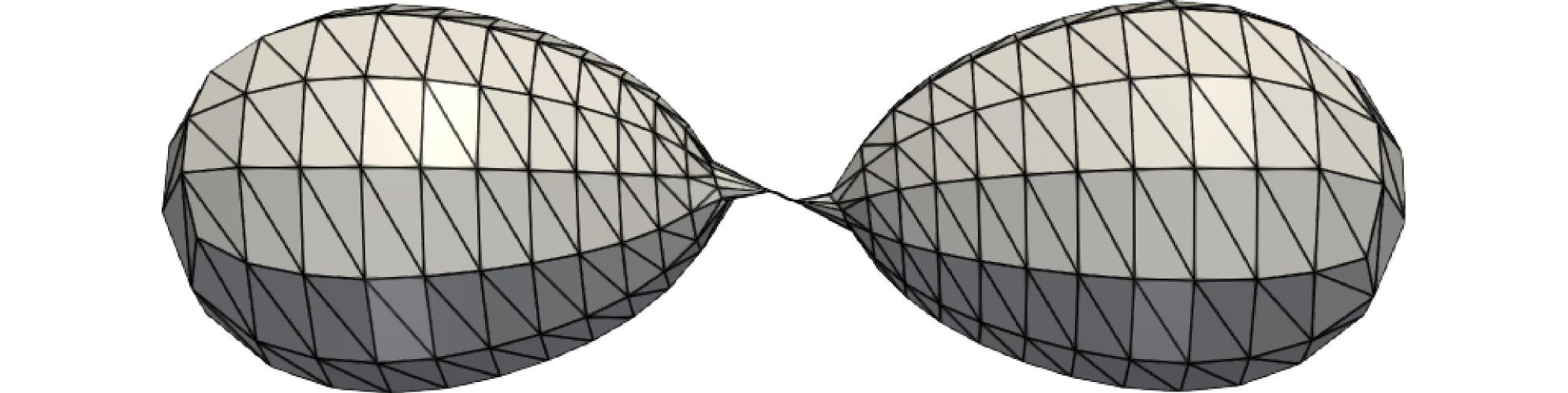} &
        \includegraphics[width=0.27\linewidth]{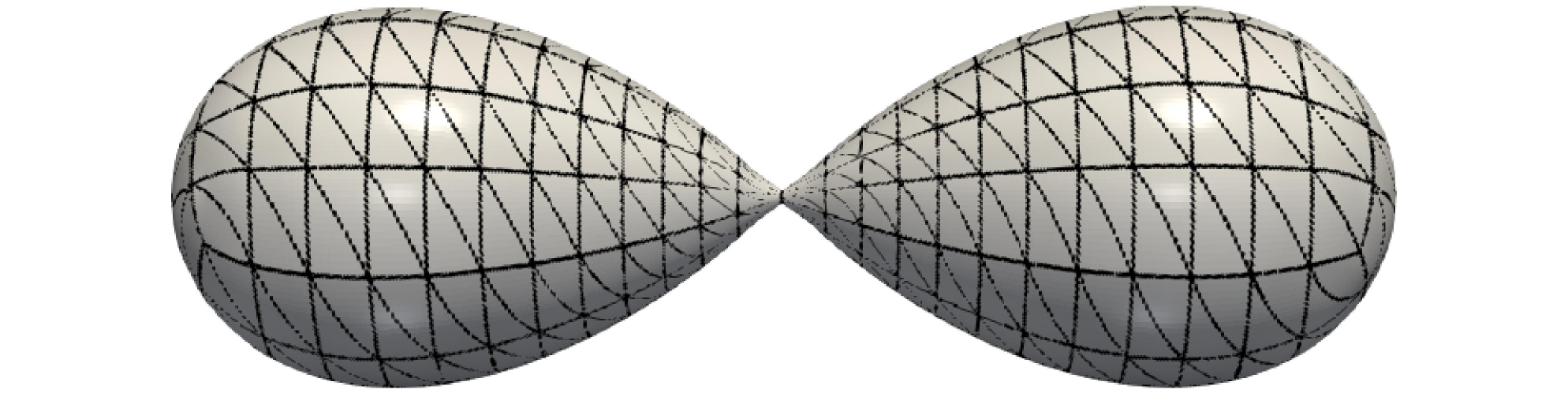} &
        \includegraphics[width=0.27\linewidth]{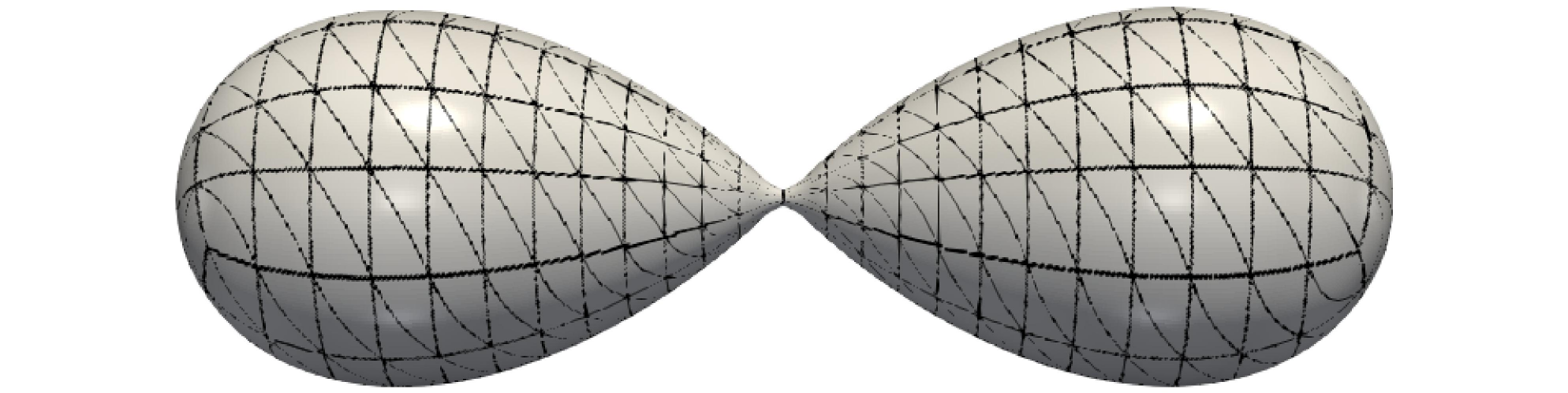} \bottomgap
        & \scriptsize $t_{\text{pinch}}\!=\!0.35070$ & \scriptsize $t_{\text{pinch}}\!=\!0.36724$ & \scriptsize $t_{\text{pinch}}\!=\!0.36742$ \\
    \end{tabular}
    \caption{SD on an $8\times 1\times 1$ cuboid as computed by our proposed scheme \eqref{eq:sd_cpg}}
    \label{fig:sd-cuboid-evolution}
\end{figure}

\begin{figure}[!htbp]
    \centering
    \begin{tikzpicture}
        \pgfplotsset{
            every axis/.append style={
                width=0.32\linewidth, height=0.5\linewidth,
                xlabel={$t$},
                xlabel style={font=\scriptsize, yshift=2pt},
                ylabel style={font=\scriptsize, yshift=-2pt},
                tick label style={font=\scriptsize},
                grid=both,
                grid style={dotted, gray!40},
                every axis plot/.append style={very thick},
                cycle list={
                    {seabornblue!80!black, thick, solid},
                    {seabornred!80!black, thick, dashed},
                    {seaborngreen!80!black,thick, dashdotted}
                },
            },
        }
        \begin{groupplot}[
            group style={group size=3 by 1, horizontal sep=1.8cm},
        ]
        \nextgroupplot[ylabel={$S(t)/S(0)$}]
            \addplot table[x=t, y=S_norm]{figures/cuboid_cg1_hist.dat};
            \addplot table[x=t, y=S_norm]{figures/cuboid_cg2_hist.dat};
            \addplot table[x=t, y=S_norm]{figures/cuboid_cg3_hist.dat};
        \nextgroupplot[
            ylabel={$|\Delta V(t)|/V(0)$},
            ymode=log,
            ymin=1e-16, ymax=1,
            ytick={1, 1e-4, 1e-8, 1e-12, 1e-16},
            yticklabels={$10^{0}$, $10^{-4}$, $10^{-8}$, $10^{-12}$, $10^{-16}$},
        ]
            \addplot table[x=t, y expr=abs(\thisrow{dV_raw})]{figures/cuboid_cg1_hist.dat};
            \addplot table[x=t, y expr=abs(\thisrow{dV_raw})]{figures/cuboid_cg2_hist.dat};
            \addplot table[x=t, y expr=abs(\thisrow{dV_raw})]{figures/cuboid_cg3_hist.dat};
        \nextgroupplot[
            ylabel={$r_{h}(t)$},
            legend pos=north west,
            legend cell align=left,
            legend style={font=\tiny, draw=none, fill opacity=0.6,
                          inner sep=2pt, row sep=-2pt},
        ]
            \addplot table[x=t, y=rh]{figures/cuboid_cg1_hist.dat};
                \addlegendentry{CG(1)--CPG(1)}
            \addplot table[x=t, y=rh]{figures/cuboid_cg2_hist.dat};
                \addlegendentry{CG(2)--CPG(2)}
            \addplot table[x=t, y=rh]{figures/cuboid_cg3_hist.dat};
                \addlegendentry{CG(3)--CPG(3)}
        \end{groupplot}
    \end{tikzpicture}
    \caption{Quantitative diagnostics for SD computed by the scheme \eqref{eq:sd_cpg}. \emph{Left:} normalized surface area $S(t)/S(0)$. \emph{Middle:} relative volume change $|\Delta V(t)|/V(0)$ on a logarithmic scale. \emph{Right:} mesh-quality indicator $r_h(t)$.}
    \label{fig:sd-cuboid-comparison}
\end{figure}


\section{Conclusions}\label{sec:conclusion}

We have proposed the first discretizations of mean curvature flow and surface diffusion that are simultaneously arbitrary-order in space, arbitrary-order in time, and structure-preserving.
The construction follows the auxiliary-variable framework of~\cite{Andrews_Farrell_2025b}, starting from an MDR semi-discretization that controls tangential motion to retain mesh quality.
We systematically introduce auxiliary variables $(\kappa, \bfR)$ to replicate the arguments underlying the dissipation of area (and, for SD, conservation of volume) on the discrete level;
at the semi-discrete level, this coincides with the very recently proposed dual-MDR semi-discretization \cite{Gao_Li_Tang_2026}. 
Combining this with a continuous Petrov--Galerkin time discretization yields the schemes \eqref{eq:mcf_cpg} and \eqref{eq:sd_cpg}, which we have shown preserve area dissipation for MCF and SD (mimicking the continuous dissipation rates) and volume conservation for SD, for arbitrary polynomial orders $k$ in space and $s$ in time.

The numerical experiments confirm the expected convergence rates and proven structure-preservation results.
The CG($k$)--CPG($s$) discretization attains convergence rates of $\calO(h^{k+1})$ in space and $\calO(\tau^{2s})$ in time, on both spherical and asymmetric perturbed ellipsoid initial data, for $k, s \in \{1, 2, 3\}$.
On the dumbbell benchmark for MCF and the $8 \times 1 \times 1$ cuboid pinch-off benchmark for SD, the schemes reproduce the expected geometric evolution up to the singularity, certify the structure-preserving properties to within nonlinear solver tolerances and quadrature errors, and retain good mesh quality throughout, inheriting the mesh-stabilizing effect of MDR.

\section*{Acknowledgments}
We would like to thank Rong Tang for informing us about her recent work on the dual-MDR formulation with Guangwei Gao \& Buyang Li~\cite{Gao_Li_Tang_2026}.

\appendix

\section{Evolution of volume and area on parameterized surfaces}
This appendix confirms the identities $\dot{S} = (\nabla_\calM\bfX, \nabla_\calM\dot{\bfX})_\calM$ \eqref{eq:discrete_reynolds} and $\dot{V} = \int_\calM \dot{\bfX}\cdot\bfn$ \eqref{eq:discrete_reynolds_volume}, as used in the proofs of our discrete structures, hold on surfaces parameterized by general continuous finite elements.

Suppose the function space $\bbV$ is defined on a mesh $\calK^h$ of $\widetilde{\calM}$, such that the cells $K \in \calK^h$ partition $\widetilde{\calM}$, while their images $\bfX(K)$ partition $\calM$.
Within each cell $K \in \calK^h$, define the Jacobian
\begin{subequations}\label{eq:jacobian}
\begin{equation}
    J(\bfx) := \sqrt{\det \bfG(\bfx)}
\end{equation}
of the map $\bfX : \widetilde{\calM} \to \calM$, where
\begin{equation}
    \bfG(\bfx) := \nabla_{\widetilde{\calM}}\bfX(\bfx)^\top\nabla_{\widetilde{\calM}}\bfX(\bfx)
\end{equation}
\end{subequations}
is the metric tensor.
We assume no singularities have formed in the flow, such that (i)~$\calM$ is Lipschitz, and (ii)~for each cell $K \subset \widetilde{\calM}$, the restriction $\calM|_{\bfX(K)}$ is smooth.
This latter condition is equivalent to the assumption that $J > 0$.

\subsection{Area identity}\label{app:area_identity_parameterized}

In writing $\dot{S} = (\nabla_\calM\bfX, \nabla_\calM\dot{\bfX})_\calM$ \eqref{eq:discrete_reynolds} we rely on the Reynolds transport theorem~\cite[Theorem~32, Lemma~9]{BGN20}.
It is not immediately clear that this is true for parameterized surfaces that are not globally twice-differentiable (for example, where $\bbV$ is composed of Lagrange finite elements) where the curvature $\Delta_\calM\bfX\cdot\bfn$ is ill-defined between cells.
In such cases, we confirm the identity to be true by evaluating the change of area on each cell $K$ directly, where the map is smooth.

We may evaluate the total surface area $S$ of the target manifold $\calM$ as the sum of the areas $S_K$ of each target cell $\bfX(K)$,
\begin{equation}
    S
        = \sum_{K \in \calK^h} S_K
        = \sum_{K \in \calK^h} \int_{K} J.
\end{equation}
We may evaluate the time derivative of the Jacobian integrand $J$ as
\begin{equation}
    \dot{J}  =  J \tr(\nabla_{\widetilde{\calM}}\bfX\bfG^{-1}\nabla_{\widetilde{\calM}}\dot{\bfX}^\top).
\end{equation}
Thus $\dot{S}$ evaluates as
\begin{multline}
    \dot{S}
        = \sum_{K \in \calK^h} \int_{K} \dot{J}
        = \sum_{K \in \calK^h} \int_{K} J \tr(\nabla_{\widetilde{\calM}}\bfX\bfG^{-1}\nabla_{\widetilde{\calM}}\dot{\bfX}^\top)  \\
        = \sum_{K \in \calK^h} \int_{\bfX(K)} \tr(\nabla_{\calM}\bfX\nabla_{\calM}\dot{\bfX}^\top)
        = \int_\calM \tr(\nabla_\calM\bfX\nabla_\calM\dot{\bfX}^\top),
\end{multline}
where in the third equality we pass from each $K \subset \widetilde{\calM}$ to its image $\bfX(K) \in \calM$.
This is exactly the identity \eqref{eq:discrete_reynolds}.

\subsection{Volume identity}\label{app:volume_identity_parameterized}

Typical statements of the Reynolds transport theorem as used to show the identity $\dot{V} = \int_\calM \dot{\bfX}\cdot\bfn$ \eqref{eq:discrete_reynolds_volume} rely on $C^2$ regularity of $\calM$. 
Similarly to the above (\Cref{app:area_identity_parameterized}) we confirm this holds for $\calM$ parameterized by continuous finite elements.

Denote by $\Omega \subset \bbR^{d+1}$ the region contained within $\calM = \partial\Omega$:
\begin{equation}\label{eq:volume_1}
    V
        = \int_\Omega 1
        = \frac{1}{d+1} \int_\Omega \div\bfX
        = \frac{1}{d+1} \int_\calM \bfX\cdot\bfn
        = \frac{1}{d+1} \sum_{K \in \calK^h} \int_{\bfX(K)} \bfX\cdot\bfn,
\end{equation}
where in the second equality we note $\div\bfX = d+1$ (where with a slight abuse of notation, $\bfX$ denotes the general position vector in $\bbR^{d+1}$), in the fourth we use the Lipschitz regularity of $\calM$ to apply the divergence theorem, and in the final equality we partition $\calM$ into cells $\bfX(K)$.
Restricted to $\bfX(K)$, the surface transport theorem for smooth $\bfF : \bfX(K) \to \bbR^{d+1}$ asserts that
\begin{subequations}
\begin{equation}
    \frac{\rmd}{\rmd t} \int_{\bfX(K)} \bfF\cdot\bfn
        = \int_{\bfX(K)} (\dot{\bfF} + (\div_\calM \bfF)\dot{\bfX}) \cdot \bfn
            + \int_{\partial\bfX(K)} (\bfF \wedge \dot{\bfX}) \cdot \bft.
\end{equation}
Taking $\bfF = \bfX$,
\begin{equation}\label{eq:volume_2}
    \frac{\rmd}{\rmd t} \int_{\bfX(K)} \bfX\cdot\bfn
        = (d+1) \int_{\bfX(K)} \dot{\bfX}\cdot\bfn
            + \int_{\partial\bfX(K)} (\bfX \wedge \dot{\bfX}) \cdot \bft;
\end{equation}
\end{subequations}
noting the tangential divergence $\div_\calM \bfX = d$.
Composing (\ref{eq:volume_1},~\ref{eq:volume_2}):
\begin{equation}
    \dot{V}
        = \sum_{K \in \calK^h} \!\left[\int_{\bfX(K)}\dot{\bfX}\cdot\bfn
            + \frac{1}{d+1}\int_{\partial\bfX(K)} (\bfX \wedge \dot{\bfX}) \cdot \bft\right]\!
        = \int_\calM \dot{\bfX}\cdot\bfn,
\end{equation}
where in the second equality we observe that, on each facet, the terms $\int_{\partial\bfX(K)} (\bfX \wedge \dot{\bfX}) \cdot \bft$ contributed from the cells either side cancel.

\bibliographystyle{siamplain}
\bibliography{references}

@article{Alvarez1993,
  title     = {Axioms and fundamental equations of image processing},
  author    = {Alvarez, Luis and Guichard, Fr{\'e}d{\'e}ric and Lions, Pierre-Louis and Morel, Jean-Michel},
  journal   = {Archive for Rational Mechanics and Analysis},
  volume    = {123},
  number    = {3},
  pages     = {199--257},
  year      = {1993},
  publisher = {Springer},
  doi       = {10.1007/BF00375127},
}

@misc{Andrews_Farrell_2025a,
  title     = {Conservative and dissipative discretisations of multi-conservative {ODEs} and {GENERIC} systems},
  doi       = {10.48550/arXiv.2511.23266},
  publisher = {arXiv},
  author    = {Andrews, B. D. and Farrell, P. E.},
  month     = nov,
  year      = {2025},
  note      = {arXiv manuscript}
}

@article{Andrews_Farrell_2025b,
  title    = {Enforcing conservation laws and dissipation inequalities numerically via auxiliary variables},
  volume   = {47},
  issn     = {1064-8275},
  doi      = {10.1137/25M1756673},
  number   = {6},
  journal  = {SIAM Journal on Scientific Computing},
  author   = {Andrews, B. D. and Farrell, P. E.},
  month    = dec,
  year     = {2025},
  pages    = {A3516--A3535}
}

@article{Bai2023,
  title     = {A new approach to the analysis of parametric finite element approximations to mean curvature flow},
  author    = {Bai, Genming and Li, Buyang},
  journal   = {Foundations of Computational Mathematics},
  volume    = {24},
  number    = {5},
  pages     = {1673--1737},
  year      = {2024},
  publisher = {Springer},
  doi       = {10.1007/s10208-023-09622-x}
}

@article{Bai2024,
  title   = {Convergence of a stabilized parametric finite element method of the {Barrett--Garcke--N{\"u}rnberg} type for curve shortening flow},
  author  = {Bai, Genming and Li, Buyang},
  journal = {Mathematics of Computation},
  year    = {2024},
  doi     = {10.1090/mcom/4019}
}

@article{Bao_Zhao_2021,
  title     = {A structure-preserving parametric finite element method for surface diffusion},
  author    = {Bao, Weizhu and Zhao, Quan},
  journal   = {SIAM Journal on Numerical Analysis},
  volume    = {59},
  number    = {5},
  pages     = {2775--2799},
  year      = {2021},
  publisher = {SIAM},
  doi       = {10.1137/21M1406751}
}

@article{Bao-Jiang-Li,
  title     = {A symmetrized parametric finite element method for anisotropic surface diffusion of closed curves},
  author    = {Bao, Weizhu and Jiang, Wei and Li, Yifei},
  journal   = {SIAM Journal on Numerical Analysis},
  volume    = {61},
  number    = {2},
  pages     = {617--641},
  year      = {2023},
  publisher = {SIAM},
  doi       = {10.1137/22M1472851}
}

@article{Bao-Li2023,
  title     = {A symmetrized parametric finite element method for anisotropic surface diffusion in three dimensions},
  author    = {Bao, Weizhu and Li, Yifei},
  journal   = {SIAM Journal on Scientific Computing},
  volume    = {45},
  number    = {4},
  pages     = {A1438--A1461},
  year      = {2023},
  publisher = {SIAM},
  doi       = {10.1137/22M1500575}
}

@article{Bao-Li2024,
  title     = {A structure-preserving parametric finite element method for geometric flows with anisotropic surface energy},
  author    = {Bao, Weizhu and Li, Yifei},
  journal   = {Numerische Mathematik},
  volume    = {156},
  number    = {2},
  pages     = {609--639},
  year      = {2024},
  publisher = {Springer},
  doi       = {10.1007/s00211-024-01398-8}
}

@article{Barrett_Garcke_Nurnberg_2007,
  author  = {Barrett, J. W. and  Garcke, H. and  N\"{u}rnberg, R.},
  journal = {Journal of Computational Physics},
  pages   = {441--467},
  title   = {A Parametric Finite Element Method for Fourth Order Geometric Evolution Equations},
  volume  = {222},
  year    = {2007},
  doi     = {10.1016/j.jcp.2006.07.026},
}

@article{Barrett_Garcke_Nurnberg_2008a,
  author  = {Barrett, J. W. and  Garcke, H. and  N\"{u}rnberg, R.},
  title   = {On the Parametric Finite Element Approximation of Evolving Hypersurfaces in $\mathbb{R}^3$},
  journal = {Journal of Computational Physics},
  volume  = {227},
  number  = {10},
  pages   = {4281--4307},
  year    = {2008},
  doi     = {10.1016/j.jcp.2007.11.023},
}

@article{Barrett_Garcke_Nurnberg_2008b,
  author  = {Barrett, J. W. and  Garcke, H. and  N\"{u}rnberg, R.},
  title   = {Parametric approximation of {W}illmore flow and related geometric evolution equations},
  journal = {SIAM Journal on Scientific Computing},
  volume  = {31},
  pages   = {225--253},
  year    = {2008},
  doi     = {10.1137/070700231},
}

@article{Barrett2006,
  title   = {Finite element approximation of a phase field model for surface diffusion of voids in a stressed solid},
  author  = {Barrett, John and Garcke, Harald and N{\"u}rnberg, Robert},
  journal = {Mathematics of Computation},
  volume  = {75},
  number  = {253},
  pages   = {7--41},
  year    = {2006},
  doi     = {10.1090/S0025-5718-05-01802-8}
}

@article{barrett2011approximation,
  title     = {The approximation of planar curve evolutions by stable fully implicit finite element schemes that equidistribute},
  author    = {Barrett, John W and Garcke, Harald and N{\"u}rnberg, Robert},
  journal   = {Numerical Methods for Partial Differential Equations},
  volume    = {27},
  number    = {1},
  pages     = {1--30},
  year      = {2011},
  publisher = {Wiley Online Library},
  doi       = {10.1002/num.20637},
}

@article{Betsch_Steinmann_2000a,
  title   = {Inherently energy conserving time finite elements for classical mechanics},
  volume  = {160},
  issn    = {0021-9991},
  doi     = {10.1006/jcph.2000.6427},
  number  = {1},
  journal = {Journal of Computational Physics},
  author  = {Betsch, P. and Steinmann, P.},
  month   = may,
  year    = {2000},
  pages   = {88--116}
}

@article{Betsch_Steinmann_2000b,
  title   = {Conservation properties of a time {FE} method—part {I}: time-stepping schemes for {N}-body problems},
  volume  = {49},
  issn    = {1097-0207},
  number  = {5},
  journal = {International Journal for Numerical Methods in Engineering},
  author  = {Betsch, P. and Steinmann, P.},
  year    = {2000},
  pages   = {599--638},
  doi     = {10.1002/1097-0207(20001020)49:5%3C599::AID-NME960%3E3.0.CO;2-9}
}

@incollection{BGN20,
  title     = {Parametric finite element approximations of curvature-driven interface evolutions},
  author    = {Barrett, John W. and Garcke, Harald and N{\"u}rnberg, Robert},
  booktitle = {Handbook of numerical analysis},
  volume    = {21},
  pages     = {275--423},
  year      = {2020},
  publisher = {Elsevier},
  doi       = {10.1016/bs.hna.2019.05.002}
}

@article{Bronsard1991,
  title     = {Motion by mean curvature as the singular limit of {Ginzburg--Landau} dynamics},
  author    = {Bronsard, Lia and Kohn, Robert V},
  journal   = {Journal of Differential Equations},
  volume    = {90},
  number    = {2},
  pages     = {211--237},
  year      = {1991},
  publisher = {Academic Press},
  doi       = {10.1016/0022-0396(91)90147-2}
}

@article{Cohen_Hairer_2011,
  title    = {Linear energy-preserving integrators for {Poisson} systems},
  volume   = {51},
  issn     = {1572-9125},
  doi      = {10.1007/s10543-011-0310-z},
  number   = {1},
  journal  = {BIT Numerical Mathematics},
  author   = {Cohen, D. and Hairer, E.},
  month    = mar,
  year     = {2011},
  pages    = {91--101}
}

@article{deckelnick2026second,
  title     = {Second order in time finite element schemes for curve shortening flow and curve diffusion},
  author    = {Deckelnick, Klaus and N{\"u}rnberg, Robert},
  journal   = {SIAM Journal on Numerical Analysis},
  volume    = {64},
  number    = {1},
  pages     = {103--124},
  year      = {2026},
  publisher = {SIAM},
  doi       = {10.1137/25M1737523}
}

@inproceedings{Desbrun1999,
  title     = {Implicit fairing of irregular meshes using diffusion and curvature flow},
  author    = {Desbrun, Mathieu and Meyer, Mark and Schr{\"o}der, Peter and Barr, Alan H},
  booktitle = {Proceedings of the 26th annual conference on Computer graphics and interactive techniques},
  pages     = {317--324},
  year      = {1999},
  doi       = {10.1145/311535.311576}
}

@article{Duan2021,
  title   = {High-order fully discrete energy diminishing evolving surface finite element methods for a class of geometric curvature flows},
  author  = {Duan, Beiping and Li, Buyang and Zhang, Zhiming},
  journal = {Annals of Applied Mathematics},
  volume  = {37},
  number  = {4},
  pages   = {405-436},
  year    = {2021},
  doi     = {10.4208/aam.OA-2021-0007},
}

@article{Duan2024,
  title     = {New artificial tangential motions for parametric finite element approximation of surface evolution},
  author    = {Duan, Beiping and Li, Buyang},
  journal   = {SIAM Journal on Scientific Computing},
  volume    = {46},
  number    = {1},
  pages     = {A587--A608},
  year      = {2024},
  publisher = {SIAM},
  doi       = {10.1137/23M1551857}
}

@misc{dune-meshdist,
  author       = {{DUNE Project}},
  title        = {{\texttt{dune-meshdist}}: a {DUNE} extension module for computing distances between triangulated surfaces},
  howpublished = {\url{https://gitlab.dune-project.org/extensions/dune-meshdist}},
  year         = {2024},
  note         = {Accessed: 2026-05-04}
}

@article{Dziuk_1994,
  author  = {Dziuk, D.},
  title   = {Convergence of a semi-discrete scheme for the curve shortening flow},
  journal = {Mathematical Models and Methods in Applied Sciences},
  volume  = {04},
  number  = {04},
  pages   = {589--606},
  year    = {1994},
  doi     = {10.1142/S0218202594000339}
}

@article{Egger_Habrich_Shashkov_2021,
  title    = {On the energy stable approximation of {Hamiltonian} and gradient systems},
  volume   = {21},
  issn     = {1609-9389},
  doi      = {10.1515/cmam-2020-0025},
  number   = {2},
  journal  = {Computational Methods in Applied Mathematics},
  author   = {Egger, H. and Habrich, O. and Shashkov, V.},
  month    = apr,
  year     = {2021},
  pages    = {335--349}
}

@article{farrell2020b,
  author  = {P. E. Farrell and R. C. Kirby and J. Marchena-Menendez},
  title   = {Irksome: automating {Runge--Kutta} time-stepping for finite element methods},
  year    = {2021},
  journal = {ACM Transactions on Mathematical Software},
  volume  = {47},
  issue   = {4},
  doi     = {10.1145/3466168}
}

@article{French_Schaeffer_1990,
  title   = {Continuous finite element methods which preserve energy properties for nonlinear problems},
  volume  = {39},
  issn    = {0096-3003},
  doi     = {10.1016/S0096-3003(20)80006-X},
  number  = {3},
  journal = {Applied Mathematics and Computation},
  author  = {French, D. A. and Schaeffer, J. W.},
  month   = oct,
  year    = {1990},
  pages   = {271--295}
}

@misc{Gao_Li_Tang_2026,
  title     = {Dual formulations of geometric curvature flows and their discretizations},
  doi       = {10.48550/arXiv.2604.18288},
  publisher = {arXiv},
  author    = {Gao, G. and Li, B. and Tang, R.},
  month     = apr,
  year      = {2026},
  note      = {arXiv manuscript}
}

@article{Gao-Garcke-Li-Tang,
  author  = {Gao, Guangwei and Garcke, Harald and Li, Buyang and Tang, Rong},
  title   = {An Energy-Stable Minimal Deformation Rate Scheme for Mean Curvature Flow and Surface Diffusion},
  journal = {SIAM Journal on Scientific Computing},
  volume  = {48},
  number  = {1},
  pages   = {A103-A131},
  year    = {2026},
  doi     = {10.1137/25M1753838},
}

@article{Gao-Li,
  title    = {Geometric-structure preserving methods for surface evolution in curvature flows with minimal deformation formulations},
  journal  = {Journal of Computational Physics},
  volume   = {524},
  pages    = {113718},
  year     = {2025},
  issn     = {0021-9991},
  doi      = {10.1016/j.jcp.2025.113718},
  author   = {Guangwei Gao and Buyang Li}
}

@article{Garcke_et_al_2025a,
  author  = {Garcke, H. and Jiang, W. and Su, C. and Zhang, G.},
  title   = {Structure-Preserving Parametric Finite Element Method for Surface Diffusion Based on {Lagrange} Multiplier Approaches},
  journal = {SIAM Journal on Scientific Computing},
  volume  = {47},
  number  = {3},
  pages   = {A1983-A2011},
  year    = {2025},
  doi     = {10.1137/24M1687546}
}

@article{Garcke_et_al_2025b,
  author  = {Garcke, H. and Nürnberg, R. and Praetorius, S. and Zhang, G.},
  title   = {Isoparametric finite element methods for mean curvature flow and surface diffusion},
  journal = {Journal of Computational Physics},
  volume  = {539},
  pages   = {114248},
  year    = {2025},
  doi     = {10.1016/j.jcp.2025.114248}
}

@article{Gilmer1972,
  title     = {Simulation of crystal growth with surface diffusion},
  author    = {Gilmer, GH and Bennema, P},
  journal   = {Journal of Applied Physics},
  volume    = {43},
  number    = {4},
  pages     = {1347--1360},
  year      = {1972},
  publisher = {American Institute of Physics},
  doi       = {10.1063/1.1661325},
}

@article{Gomer1990,
  title   = {Diffusion of adsorbates on metal surfaces},
  author  = {Gomer, Rep},
  journal = {Reports on Progress in Physics},
  volume  = {53},
  number  = {7},
  pages   = {917--1002},
  year    = {1990},
  doi     = {10.1088/0034-4885/53/7/002}
}

@article{Hairer_Lubich_2014,
  title    = {Energy-diminishing integration of gradient systems},
  volume   = {34},
  issn     = {0272-4979},
  doi      = {10.1093/imanum/drt031},
  number   = {2},
  journal  = {IMA Journal of Numerical Analysis},
  author   = {Hairer, E. and Lubich, C.},
  month    = apr,
  year     = {2014},
  pages    = {452--461}
}

@book{Hairer_Lubich_Wanner_2006,
  title      = {Geometric Numerical Integration: Structure-Preserving Algorithms for Ordinary Differential Equations},
  isbn       = {978-3-540-30666-5},
  shorttitle = {Geometric Numerical Integration},
  publisher  = {Springer Science \& Business Media},
  author     = {Hairer, E. and Lubich, C. and Wanner, G.},
  month      = may,
  year       = {2006},
  address    = {Heidelberg, Germany}
}

@manual{ham2023c,
  title        = {Firedrake User Manual},
  author       = {D. A. Ham and P. H. J. Kelly and L. Mitchell and C. J. Cotter and R. C. Kirby and K. Sagiyama and N. Bouziani and S. Vorderwuelbecke and T. J. Gregory and J. Betteridge and D. R. Shapero and R. W. Nixon-Hill and C. J. Ward and P. E. Farrell and P. D. Brubeck and I. Marsden and T. H. Gibson and M. Homolya and T. Sun and A. T. T. McRae and F. Luporini and A. Gregory and M. Lange and S. W. Funke and F. Rathgeber and G.-T. Bercea and G. R. Markall},
  organization = {Imperial College London and University of Oxford and Baylor University and University of Washington},
  year         = {2023},
  doi          = {10.25561/104839}
}

@article{Hu_Li_2022,
  author  = {Hu, J. and Li, B.},
  journal = {Numerische Mathematik},
  number  = {1},
  title   = {Evolving finite element methods with an artificial tangential velocity for mean curvature flow and {Willmore} flow},
  volume  = {152},
  year    = {2022},
  pages   = {127--181},
  doi     = {10.1007/s00211-022-01309-9},
}

@article{jiang2024stable,
  title     = {Stable backward differentiation formula time discretization of {BGN}-based parametric finite element methods for geometric flows},
  author    = {Jiang, Wei and Su, Chunmei and Zhang, Ganghui},
  journal   = {SIAM Journal on Scientific Computing},
  volume    = {46},
  number    = {5},
  pages     = {A2874--A2898},
  year      = {2024},
  publisher = {SIAM},
  doi       = {10.1137/23M1625597},
}

@article{jiang2025predictor,
  title     = {Predictor-corrector, {BGN}-based parametric finite element methods for surface diffusion},
  author    = {Jiang, Wei and Su, Chunmei and Zhang, Ganghui and Zhang, Lian},
  journal   = {Journal of Computational Physics},
  volume    = {530},
  pages     = {113901},
  year      = {2025},
  publisher = {Elsevier},
  doi       = {10.1016/j.jcp.2025.113901}
}

@article{Jiang21,
  title     = {A perimeter-decreasing and area-conserving algorithm for surface diffusion flow of curves},
  author    = {Jiang, Wei and Li, Buyang},
  journal   = {Journal of Computational Physics},
  volume    = {443},
  pages     = {110531},
  year      = {2021},
  publisher = {Elsevier},
  doi       = {10.1016/j.jcp.2021.110531}
}

@article{Jiang23,
  title     = {A second-order in time, {BGN}-based parametric finite element method for geometric flows of curves},
  author    = {Jiang, Wei and Su, Chunmei and Zhang, Ganghui},
  journal   = {Journal of Computational Physics},
  volume    = {514},
  pages     = {113220},
  year      = {2024},
  publisher = {Elsevier},
  doi       = {10.1016/j.jcp.2024.113220}
}

@article{Kovacs-Li-Lubich2019,
  title     = {A convergent evolving finite element algorithm for mean curvature flow of closed surfaces},
  author    = {Kov{\'a}cs, Bal{\'a}zs and Li, Buyang and Lubich, Christian},
  journal   = {Numerische Mathematik},
  volume    = {143},
  pages     = {797--853},
  year      = {2019},
  publisher = {Springer},
  doi       = {10.1007/s00211-019-01074-2}
}

@article{Li1,
  title     = {Convergence of {Dziuk}'s linearly implicit parametric finite element method for curve shortening flow},
  author    = {Li, Buyang},
  journal   = {SIAM Journal on Numerical Analysis},
  volume    = {58},
  number    = {4},
  pages     = {2315--2333},
  year      = {2020},
  publisher = {SIAM},
  doi       = {10.1137/19M1305483}
}

@article{Li1999,
  title     = {A numerical study of electro-migration voiding by evolving level set functions on a fixed {Cartesian} grid},
  author    = {Li, Zhilin and Zhao, Hongkai and Gao, Huajian},
  journal   = {Journal of Computational Physics},
  volume    = {152},
  number    = {1},
  pages     = {281--304},
  year      = {1999},
  publisher = {Elsevier},
  doi       = {10.1006/jcph.1999.6249}
}

@article{Li2,
  title     = {Convergence of {Dziuk}'s semidiscrete finite element method for mean curvature flow of closed surfaces with high-order finite elements},
  author    = {Li, Buyang},
  journal   = {SIAM Journal on Numerical Analysis},
  volume    = {59},
  number    = {3},
  pages     = {1592--1617},
  year      = {2021},
  publisher = {SIAM},
  doi       = {10.1137/20M136935X}
}

@article{McLachlan_Quispel_Robidoux_1999,
  title    = {Geometric integration using discrete gradients},
  volume   = {357},
  doi      = {10.1098/rsta.1999.0363},
  number   = {1754},
  journal  = {Philosophical Transactions of the Royal Society of London. Series A: Mathematical, Physical and Engineering Sciences},
  author   = {McLachlan, R. I. and Quispel, G. R. W. and Robidoux, N.},
  month    = apr,
  year     = {1999},
  pages    = {1021--1045}
}

@article{Mullins1956,
  title     = {Two-dimensional motion of idealized grain boundaries},
  author    = {Mullins, William W},
  journal   = {Journal of Applied Physics},
  volume    = {27},
  number    = {8},
  pages     = {900--904},
  year      = {1956},
  publisher = {American Institute of Physics},
  doi       = {10.1063/1.1722511},
}

@article{Mullins1957,
  title     = {Theory of thermal grooving},
  author    = {Mullins, William W},
  journal   = {Journal of Applied Physics},
  volume    = {28},
  number    = {3},
  pages     = {333--339},
  year      = {1957},
  publisher = {American Institute of Physics},
  doi       = {10.1063/1.1722742},
}

@article{Osher1988,
  title     = {Fronts propagating with curvature-dependent speed: Algorithms based on {Hamilton--Jacobi} formulations},
  author    = {Osher, Stanley and Sethian, James A},
  journal   = {Journal of Computational Physics},
  volume    = {79},
  number    = {1},
  pages     = {12--49},
  year      = {1988},
  publisher = {Elsevier},
  doi       = {10.1016/0021-9991(88)90002-2}
}

@article{rathgeber2016,
  doi     = {10.1145/2998441},
  year    = 2016,
  volume  = {43},
  number  = {3},
  pages   = {1--27},
  author  = {F. Rathgeber and D. A. Ham and L. Mitchell and M. Lange and F. Luporini and A. T. T. Mcrae and G.-T. Bercea and G. R. Markall and P. H. J. Kelly},
  title   = {Firedrake: automating the finite element method by composing abstractions},
  journal = {{ACM} Transactions on Mathematical Software}
}

@article{Wang2015,
  title     = {Sharp interface model for solid-state dewetting problems with weakly anisotropic surface energies},
  author    = {Wang, Yan and Jiang, Wei and Bao, Weizhu and Srolovitz, David J},
  journal   = {Physical Review B},
  volume    = {91},
  number    = {4},
  pages     = {045303},
  year      = {2015},
  publisher = {APS},
  doi       = {10.1103/PhysRevB.91.045303},
}

@article{Zhao2021,
  title     = {An energy-stable parametric finite element method for simulating solid-state dewetting},
  author    = {Zhao, Quan and Jiang, Wei and Bao, Weizhu},
  journal   = {IMA Journal of Numerical Analysis},
  volume    = {41},
  number    = {3},
  pages     = {2026--2055},
  year      = {2021},
  publisher = {Oxford University Press},
  doi       = {10.1093/imanum/draa070}
}

@misc{codes_repository,
  title={Codes: Arbitrary order structure-preserving discretizations for geometric curvature flows},
  author={Zhang, G and Andrews, Boris D. and Farrell, Patrick E.},
  howpublished={\url{https://github.com/Ganghui-Zhang/Arbitrary-order-structure-preserving-discretizations-for-geometric-curvature-flows}},
  year={2026},
}

@article{Akrivis_Li_Tang_Zhang_2025,
  title={High-order mass-, energy- and momentum-conserving methods for the nonlinear {S}chr{\"o}dinger equation},
  author={Akrivis, Georgios and Li, Buyang and Tang, Rong and Zhang, Hui},
  journal={Journal of Computational Physics},
  volume={532},
  pages={113974},
  year={2025},
  publisher={Elsevier},
  doi   = {10.1016/j.jcp.2025.113974}
}

\end{document}